\documentclass[11pt]{amsart}
\usepackage[english]{babel}
\usepackage[utf8]{inputenc}
\usepackage{amsmath}
\usepackage{graphicx}
\usepackage{amssymb}
\usepackage{amsthm}
\usepackage{tikz-cd}
\usetikzlibrary{calc}
\usepackage{mathrsfs}
\usepackage[colorinlistoftodos]{todonotes}
\usepackage{enumitem}
\usepackage{yfonts}
\usepackage{xcolor}
\usepackage{mathtools}
\usepackage{hyperref}
\usepackage{comment}
\newcommand{\Isom}{\mathrm{Isom}}

\newcommand{\F}{\mathbb{F}}
\newcommand{\N}{\mathbb{N}}
\newcommand{\Vol}{\operatorname{Vol}}

\DeclareMathOperator{\Fix}{Fix}
\DeclareMathOperator{\Min}{Min}

\title[Betti Numbers of Negatively Curved Orbifolds]{Betti Numbers of Negatively Curved Orbifolds with Coefficients in Arbitrary Fields}
\author{Guy Kapon, Raz Slutsky}

\date{}
\newtheorem{thm}{Theorem}[section]
\newtheorem{lem}[thm]{Lemma}
\newtheorem{prop}[thm]{Proposition}

\newtheorem{claim}[thm]{Claim}

\newcommand{\FF}{\mathbb{F}}

\newcommand{\R}{\mathbb{R}}

\newcommand{\Z}{\mathbb{Z}}

\begin{document}

\begin{abstract}
We show that the Betti numbers of finite-volume negatively curved orbifolds grow at most linearly with the volume, with coefficients in an arbitrary field. In particular, this gives a  linear bound for the Betti numbers of finite-volume hyperbolic orbifolds over $\F_p$. This extends a theorem of Gromov from manifolds to orbifolds in negative curvature, and answers a question of Samet, by strengthening his theorem from characteristic $0$ to arbitrary characteristic. The key new input is a quantitative bound on the homology of spherical quotients.
\end{abstract}

\maketitle

\section{Introduction}

Let $X$ be a Hadamard manifold, i.e., a connected, simply connected, complete Riemannian manifold of non-positive curvature normalized such that $-1 \leq K \leq 0$, and let $\Gamma$
be a discrete subgroup of $\textrm{Isom}(X)$. If $\Gamma$ is torsion-free then $M = X / \Gamma$ is a Riemannian
manifold. Otherwise, $M$ has the structure of an orbifold.

An important special case is that of symmetric spaces, that is,  $X$ is given by $X= G/K$ for $G$ a semi-simple Lie group and $K$ a maximal compact subgroup, and $\Gamma$ is a lattice in $G$, i.e., a discrete subgroup such that $G/\Gamma$ carries a finite $G$-invariant measure. This includes, for example, the important class of hyperbolic manifolds and orbifolds.

In many cases, the volume of a non-positively curved manifold controls the complexity of its topology. One of the first results demonstrating this phenomenon is the following celebrated theorem of Gromov
 from 1985, \cite{BGS85}: 

\begin{thm}[Gromov, 1985]\label{thm:Gromov}
There exists a constant $C_n$, depending only on the dimension, such that for every Hadamard manifold $X$ and torsion-free lattice $\Gamma$ 
$$\sum_{i=1}^n b_i(X / \Gamma) \leq C_n \operatorname{Vol}(X / \Gamma) $$
where $b_i(X / \Gamma)$ are the betti numbers with respect to coefficients in any field.
\end{thm}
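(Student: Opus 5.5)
The approach is Gromov's thick–thin decomposition combined with a nerve argument. Write $M = X/\Gamma$ and fix a small constant $\epsilon = \epsilon_n$, a Margulis constant for dimension $n$. Split $M$ into the thick part $M_{\geq \epsilon}$, where the injectivity radius is at least $\epsilon$, and the thin part $M_{<\epsilon}$. By the Margulis lemma, each component of the thin part is either a tubular neighborhood of a short closed geodesic or a cusp neighborhood, with virtually nilpotent fundamental group.

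The first step is to show that $M$ deformation retracts onto a subset $M'$ that lies close to the thick part. For this I would use the convexity of displacement functions $d_\gamma(x)=d(x,\gamma x)$ on $X$. Pushing along the gradient of a smoothed version of $\min_\gamma d_\gamma$, away from the thin regions and into the thick part, gives the retraction. For tubes this needs the cores to be removable without changing homology. In curvature $-1\le K\le 0$ that is not automatic, so the cleaner formulation is Gromov's: construct a function on $M$ whose sublevel sets are homotopy equivalent to $M$ and are covered by balls of definite size. Checking this flow step carefully is where I expect the most difficulty. The points to verify are that the retraction has no critical points in the thin part, which follows from convexity of $d_\gamma$ and from there being no flats at small displacement, and that the retracted set $M'$ still has injectivity radius at least $\epsilon' = \epsilon'_n$.

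Next, take a maximal $\epsilon'/2$-separated set $\{x_j\}$ in $M'$. The balls $B(x_j,\epsilon'/4)$ are disjoint and have volume at least $c_n>0$: they are embedded, and by Bishop–Günther with $K\le 0$ their volume is at least Euclidean. The number of points is therefore at most $\Vol(M)/c_n$. The balls $B(x_j,\epsilon')$ cover $M'$. Each is an embedded geodesically convex ball, since $K\leq 0$ and the radius is below the injectivity radius, so every nonempty finite intersection is convex and hence contractible. By the nerve lemma $M'\simeq N$, where $N$ is the nerve of this cover.

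The remaining step is to bound the size of the nerve. By Bishop–Gromov with $K\ge -1$, each ball $B(x_j,2\epsilon')$ has volume at most $V_n$, the volume of a ball of radius $2\epsilon'$ in $\mathbb{H}^n$. It therefore contains at most $V_n/c_n$ of the disjoint small balls, so each vertex has degree bounded by a constant $D_n$. The number of simplices of $N$ is then at most $2^{D_n+1}\cdot \Vol(M)/c_n$. Since $b_i(N;\F)$ is at most the number of $i$-simplices for any field $\F$, summing over $i$ gives the bound $C_n\Vol(X/\Gamma)$, uniformly in the field.
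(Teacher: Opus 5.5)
\textbf{The thin-part step fails.} This is where the real content of Gromov's argument lies, and your proposal does not supply it.

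\emph{The claimed retraction cannot exist.} You claim that $M$ deformation retracts onto a set $M'$ on which the injectivity radius is at least $\epsilon'_n$. This is false in general. A closed hyperbolic surface of fixed genus, or a closed hyperbolic $3$-manifold obtained by Dehn filling, can contain a closed geodesic of arbitrarily small length. Then $\{\mathrm{inj}\ge\epsilon'\}$ is a proper subset of $M$. A closed $n$-manifold never deformation retracts onto a proper subset: the mod $2$ fundamental class would factor through $H_n(M\setminus\{p\};\mathbb{Z}/2)=0$.

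\emph{The justification is backwards.} Convexity of $d_\gamma$ does not rule out critical points. For a loxodromic $\gamma$ of small translation length, $d_\gamma$ attains its minimum exactly on the axis of $\gamma$, which projects to the core geodesic of a Margulis tube. So any function built from the $d_\gamma$ has critical sets inside the thin part. The core of a tube can never be removed up to homotopy, in any curvature, because $H_*(M,M\setminus\gamma;\mathbb{Z}/2)$ is the homology of a Thom space and is nonzero. Your alternative formulation, sublevel sets homotopy equivalent to $M$ and covered by balls of definite size, fails for the same reason. Relatedly, nothing in your argument really uses $K<0$. Yet for $-1\le K\le 0$ the inequality as written fails for small flat tori, whose thick part is empty. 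Gromov's nonpositively curved version needs extra hypotheses, namely real-analyticity and no Euclidean factor.

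\textbf{What is missing.} You need the Morse-theoretic accounting recalled in the paper's last section, following Gromov and Samet.
\begin{itemize}
\item Take $F=\sum_{\gamma}\sum_i g_\gamma(d_{\gamma^i})$. Its critical set is the region where $F$ vanishes together with the sets $\bigcap_{\eta\in\Delta_x}\Min(\eta)$. For torsion-free $\Gamma$ and $K<0$ these are just the short closed geodesics; parabolic elements attain no minimum, so the cusps do retract away.
\item Bound $\dim_\F H_j(M;\F)$ by the sum over critical values $c$ of $\dim_\F H_j(f_{\le c+\mu},f_{\le c-\mu};\F)$.
\item The bottom level is handled by your good-cover and nerve count. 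That part is fine, provided you take ball radii a fixed fraction of the injectivity radius so that intersections really are convex. Note also that the nerve models the union of the balls, not $M'$.
\item Each geodesic level is identified by excision with $H_j(N(V),N(V)\setminus N_0(V);\F)$ for the normal bundle of the core geodesic. This is bounded independently of $\F$.
\item Finally, a separate counting statement is needed: the number of such short geodesics is at most a constant times the essential volume \cite[Thm.~3.5]{samet2013betti}.
\end{itemize}
Your proposal contains neither the per-tube contribution nor this count, and without them the linear bound does not follow.
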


In recent years, a range of results connecting lattices' complexity and their co-volume have been proved. See for example the work of Gelander relating the co-volume to the homotopy type of the quotient in \cite{gelander2004homotopy}, works relating the co-volume to the minimal number of generators such as \cite{gelander2020minimal, lubotzky2022asymptotic}, and growth of other homological invariants, for example, \cite{abert2022homology, frkaczyk2022homotopy, CHM, CAM, 7s, abert2017rank, BL12, BL19, BGS20, abert2023Betti}, and versions for general hyperbolic groups in \cite{lazarovich2021volume, lazarovich2023finite}.
Note that most of these results are proved for torsion-free lattices. However, in many settings, and particularly from the perspective of arithmetic and number theory, orbifolds arise very naturally, and many familiar lattices contain torsion. For instance, $\mathrm{SL}_2(\mathbb{Z})$, where the quotient is the modular orbifold. More generally, for a number field $k$ with ring of integers $\mathcal{O}_k$, the lattices $\mathrm{SL}_2(\mathcal{O}_k)$(and their higher-rank analogues) typically have torsion, so the associated locally symmetric quotients are naturally orbifolds. \\

In this paper, we extend Gromov's volume-linear bound to finite-volume orbifolds of negative curvature, with coefficients in an arbitrary field. Namely, we prove:

\begin{thm}\label{thm:intro main theorem}
There exists a constant $C_n$, depending only on the dimension, such that for every Hadamard manifold $X$ 
with $-1 \leq K < 0$ and any lattice $\Gamma$ 
$$\sum_{i=1}^n b_i(X / \Gamma, \mathbb{F}) \leq C_n \operatorname{Vol}(X / \Gamma) $$
for any field $\mathbb{F}$.
\end{thm}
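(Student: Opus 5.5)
We follow Gromov's strategy: cover the thick part of the orbifold by a nerve-controlled collection of small "good" sets, treat the thin part separately via the Margulis lemma, and bound Betti numbers through a spectral sequence. In the orbifold setting the nerve sets are no longer contractible; they are quotients of balls by finite isotropy groups. The new ingredient, flagged in the abstract, is a quantitative bound on the homology of such finite quotients. Over $\mathbb{F}$ of characteristic $p$ dividing the group order, that homology need not vanish.

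\textbf{Step 1: thick--thin decomposition.} Fix a Margulis constant $\varepsilon_n$. The $\varepsilon$-thin part of $X/\Gamma$ is the set of points $x$ admitting a nontrivial $\gamma\in\Gamma$ with $d(x,\gamma x)<\varepsilon$, where we now ignore elliptic elements fixing $x$ up to a bounded finite group.
\begin{itemize}
\item By the Margulis lemma, in pinched negative curvature the thin components are either cusps or tubes around short geodesics, modulo finite groups.
\item Their fundamental groups are virtually nilpotent, in fact virtually cyclic for tubes.
\item Each such component deformation retracts onto a bounded-complexity piece of its boundary in the thick part. So $H_*(X/\Gamma,\mathbb{F})$ is controlled by the thick part together with finitely many boundary pieces, whose number is bounded linearly by volume as in \cite{BGS85}.
\end{itemize}

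\textbf{Step 2: covering the thick part.} Choose a maximal $\varepsilon/10$-separated net in the thick part, using the orbifold injectivity radius measured modulo the stabilizer. The number of points is at most $C\operatorname{Vol}$, and the nerve of the cover by balls $B(x_i,\varepsilon/5)$ has bounded degree. Each ball and each finite intersection lifts to a convex set $U\subset X$. The image of $U$ in $X/\Gamma$ is then $U/F$ for a finite group $F$: a stabilizer that fixes a point and hence acts on $U$ and on the unit tangent sphere.

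\textbf{Step 3: the Mayer--Vietoris (Čech-to-homology) spectral sequence.} This spectral sequence has $E^1_{p,q}=\bigoplus_{\sigma\in N_p} H_q(U_\sigma/F_\sigma;\mathbb{F})$ and converges to $H_*$ of the thick part. The total dimension of $E^1$ in degrees $\le n$ bounds the Betti sum. Since $|N_p|\le C\operatorname{Vol}$, it suffices to prove the following.

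\emph{Key bound:} for a finite group $F\subset O(n)$ acting on a convex set, $\sum_q \dim H_q(U/F;\mathbb{F})\le D_n$, uniformly in $F$.

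Since $U$ is convex and star-shaped about a fixed point, $U/F$ is contractible, by a radial equivariant contraction. So the key bound is trivial and all $E^1_{p,q}$ with $q>0$ vanish.

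The real difficulty lies elsewhere. Intersections need not contain a common fixed point, and the retraction of thin parts involves quotients of tubes by finite groups. That is where spherical quotients $S^{n-1}/F$ arise, e.g. as links of singular strata, or equivalently the boundary of a neighbourhood of a singular point.

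\textbf{Main obstacle.} The hard step is the uniform bound $\sum_q \dim H_q(S^{k}/F;\mathbb{F})\le D_k$ for arbitrary finite $F\subset O(k+1)$, in every characteristic.
\begin{itemize}
\item In characteristic $0$, or when $p\nmid|F|$, this follows by transfer, since $H_*(S^k/F)$ is the invariants of $H_*(S^k)$. This is Samet's case.
\item For general $p$, I would induct on dimension using the stratification of $S^k$ by isotropy type. The quotient is a CW complex obtained from an $F$-equivariant triangulation.
\item The point is to choose the triangulation with a number of cells bounded independently of $|F|$. One way is to use Jordan's theorem: $F$ has an abelian normal subgroup $A$ of index $\le J(k)$, so one can first pass to $S^k/A$, which is controlled via torus-orbit structure (a quotient of a join of circles/spheres by a subgroup of a torus). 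Then a bounded-index quotient multiplies cell counts by at most $J(k)$.
\end{itemize}
With this bound, every term of $E^1$ has dimension at most $D_n$ times the number of nerve simplices, which yields $\sum b_i\le C_n\operatorname{Vol}(X/\Gamma)$.
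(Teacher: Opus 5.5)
Your proposal has genuine gaps in both halves of the argument: the geometric reduction and the spherical bound.

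\textbf{Geometric reduction.} Your Step~1 describes the thin part as cusps and tubes around short geodesics. In an orbifold, however, the region where $\Gamma_\varepsilon(x)$ is a \emph{large finite} group is a neighbourhood of (possibly nested) singular strata $Y/\Gamma_Y$ with large isotropy. These strata are lower-dimensional orbifolds whose volume and Betti numbers can grow with $\Vol(X/\Gamma)$. So such components neither retract onto bounded-complexity pieces nor are controlled by counting them. Moreover, as you observe yourself, every nerve piece $U/F$ is contractible, so your $E^1$ page contains no spherical quotients at all. Hence the closing sentence of the proposal does not follow from Steps~1--3: you never set up the mechanism through which $S^k/F$ enters, nor an estimate summing over all strata.

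The paper, following Samet, does this as follows. It enlarges the thick part to the quasi-thick part $O_{\ge\varepsilon,m}$, which has a bounded-overlap good cover. It builds a $\Gamma$-invariant Morse function from displacement functions of small-displacement, $M_2$-stable elements. Morse theory then reduces $H_j(O;\mathbb{F})$ to a sum over critical pairs $(Y,C)$ of $H_j(N(V),N(V)\setminus N_0(V);\mathbb{F})$. For elliptic pairs, this term is bounded through the normal sphere bundle over a good cover of the quasi-thick part of $Y/\Gamma_Y$. The restriction of that bundle to each nerve piece retracts onto $S^k/\Gamma_{\widetilde V}$, and this is the only place the spherical bound is used. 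The sum over strata is controlled by $\sum_Y\operatorname{ess\text{-}vol}_{\rho_3}(Y/\Gamma_Y)\le\operatorname{ess\text{-}vol}_{\rho_4}(O)$. Geodesic pairs are controlled by Samet's Theorem~3.5, which is where $K<0$ enters.

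\textbf{Spherical bound.} Your plan bounds cell counts, but the step ``a bounded-index quotient multiplies cell counts by at most $J(k)$'' is unjustified. The group $Q=G/A$ need not act cellularly on any CW structure you build on $S^k/A$, and a $Q$-admissible subdivision has no a priori size bound. Without such a bound, bounded Betti numbers of $X=S^k/A$ do not pass to $X/Q$ when $p$ divides $|Q|$; this is exactly where transfer fails, and this passage is the heart of the theorem.

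The missing ingredient is Smith theory. Take an admissible $C_p$-action on a $d$-dimensional $Y$ with $b_i(Y)\le k$.
\begin{enumerate}
\item The Smith--Floyd inequality gives $\sum_i b_i(Y^{C_p})\le\sum_i b_i(Y)$.
\item Since $C_p$ acts freely on $(Y,Y^{C_p})$, the relative Cartan--Leray spectral sequence, with $\dim H_a(C_p;M)\le\dim M$, bounds $b_n(Y/C_p,Y^{C_p})$.
\item The long exact sequence of the pair then yields $b_n(Y/C_p)\le 3(d+1)k$.
\end{enumerate}
One then iterates along a normal series with $C_p$ quotients of a Sylow $p$-subgroup $P\le Q$. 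Finally, since $p\nmid[Q:P]$, the transfer $H_*(X/Q;\mathbb{F})\hookrightarrow H_*(X/P;\mathbb{F})$ is injective.

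For the abelian step, too, the paper bounds Betti numbers rather than cells. It uses the Mayer--Vietoris spectral sequence of the cover by the sets $\{v^{(\alpha)}\neq 0\}/A$. Each $J$-fold intersection is homotopy equivalent to at most $2^b$ copies of an $a$-torus, which gives the bound $3^n$. Your ``torus-orbit structure'' points in this direction but supplies no bound.
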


The point is that passing from manifolds to orbifolds introduces finite stabilizers. Over characteristic $0$ these stabilizers are often invisible at the level of Betti numbers, but in positive characteristic they can contribute new homology classes, so one needs uniform control of the topology coming from the singularities.

In recent years, due to connections to number theory, mod-$p$ betti numbers and torsion in homology with $\F_p$-coefficients have attracted a significant amount of attention. In characteristic zero, tools such as L\"{u}ck's approximation theorem \cite{Luck1994} and the work of \cite{7s} provide strong control on normalized Betti numbers along towers of covers in many settings. In positive characteristic, our understanding of homology growth is still much more limited, and the behavior can differ substantially from the characteristic zero picture; for instance, constructions of Avramidi--Okun--Schreve \cite{AvramidiOkunSchreve2021} highlight phenomena for mod-$p$ and torsion homology growth in nonpositive curvature that have no direct characteristic-zero analogue. See the detailed introduction of \cite{abert2022homology} for an overview of this circle of problems, and for the current state-of-the-art in the growth of $\F_p$ homology for chains of higher-rank, torsion-free lattices.\\

For general sequences of non-commensurable lattices, results on the first homology over positive characteristic include \cite{fraczyk2022growth}, and most recently the breakthrough of \cite{fraczyk2023poisson}, showing, among other things, that in higher rank the minimal number of generators grows sub-linearly in the volume. Again, these results are for torsion-free lattices. For example, bounds on the number of generators of lattices with torsion are only known for non-uniform lattices, due to \cite{lubotzky2022asymptotic}.
We also note that the recent work of the second author and Gelander \cite{gelander2025quantitative} shows, by proving a quantitative version of Selberg's lemma, that every arithmetic locally symmetric orbifold is covered by a manifold where the index of the covering is poly-logarithmic in the volume of the orbifold.
However, this does not yield good enough control of the betti numbers for this purpose. Moreover, there are many non-arithmetic locally symmetric orbifolds in rank 1, see for example \cite{gelander2014counting}.\\

In \cite{samet2013betti}, Samet showed that the same conclusion as in Theorem \ref{thm:Gromov} holds for orbifolds, assuming that the curvature is negative and the betti numbers are taken with respect to rings over characteristic 0. Such types of results for orbifolds are usually hard to prove by geometric methods, as a good understanding of the singularities is needed. Theorem~\ref{thm:intro main theorem} extends Samet's result to coefficients in an arbitrary field and answers a question posed in the same paper.

Note also the result of \cite{Senska2023}, where related volume-linear bounds for Betti numbers over arbitrary fields (and, moreover, bounds on logarithmic torsion in homology) are obtained for hyperbolic orbifolds, with constants that depend additionally on a bound on the orders of finite subgroups of $\Gamma$. However, for co-compact lattices, the maximal order of a finite subgroup can grow with the co-volume; see \cite{gelander2025quantitative} for examples.


The main difference between the torsion-free case and the general case is that the finite subgroups of $\Gamma$ can introduce additional homology in positive characteristic. Controlling these finite subgroups and, in particular, their linear actions on the tangent space is therefore a central ingredient in our proof.

Our main topological input is the following uniform bound on the homology of spherical quotients.

\begin{thm}\label{thm:intro main topological theorem}
Let $k \in \N$, and let $G$ be any finite group acting linearly on $\mathbb{S}^k$, then there exists a constant $C$, depending only on $k$, such that $$b_i(\mathbb{S}^k/G, \mathbb{F}) \leq C $$ for every field $\mathbb{F}$. In fact, one can take $C = 3^k(k+1)!^{\log(3k)}$.
\end{thm}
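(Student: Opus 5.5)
The plan is to reduce to an abelian group, whose orbit space can be described very explicitly, and then handle the remaining bounded-index part by a stratification argument. Let $G\subset O(k+1)$ be finite. By Jordan's theorem, in the sharp form due to Collins or Weisfeiler, $G$ contains a normal abelian subgroup $A$ whose index $[G:A]$ is bounded by a function of $k$ alone. I expect the factor $(k+1)!$ in the constant to come from this index, or from the symmetric group permuting eigenspaces. Since $A$ is abelian and acts orthogonally, $\mathbb{R}^{k+1}$ splits $A$-invariantly as
\[\mathbb{R}^{k+1}=V_1\oplus\dots\oplus V_m\oplus W,\]
where each $V_j$ is a $2$-plane on which $A$ acts by rotations, and $A$ acts on $W$ by $\pm1$ characters. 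Consequently $\mathbb{S}^k$ is the join $S^1*\dots*S^1*\mathbb{S}^{\dim W-1}$, and $A$ lies in a maximal torus $T\cong (S^1)^m$ times an elementary abelian $2$-group. The map $\mathbb{S}^k\to\Delta$, $x\mapsto(|x_{V_1}|^2,\dots,|x_{V_m}|^2,\dots)$, is invariant under $A$. Over the open face $\Delta_J$ of the simplex, the fibre of $\mathbb{S}^k/A\to\Delta$ is a quotient of a torus $T^J$ (together with a finite sphere-factor part) by the image of $A$. That quotient is again a torus of dimension $|J|$, up to a finite $2$-group action on the $W$-coordinates.

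The first real step is a bound for $b_i(\mathbb{S}^k/A,\mathbb{F})$. I will filter $\mathbb{S}^k/A$ by the preimages of the skeleta of $\Delta$. The successive relative pieces are homeomorphic to (open cell) $\times$ (compact torus quotient). Their relative homology, i.e. Borel--Moore homology of the open stratum, has total dimension at most $2^{|J|}$ over any field, with $\mathbb{F}$ entering only through tori, which have free integral homology. The long exact sequences of pairs give subadditivity, so summing over faces yields $\sum_i b_i(\mathbb{S}^k/A,\mathbb{F})\le\sum_J 2^{|J|}\le 3^{k+1}$. This is the source of the $3^k$.

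Next I pass from $A$ to $G$. The finite group $Q=G/A$ acts on $\mathbb{S}^k/A$, and since $G$ normalizes $A$ it permutes the isotypic planes $V_j$. Hence $Q$ acts on $\Delta$ through a permutation group. After barycentric subdivision of $\Delta$, $Q$ acts simplicially, with at most $(k+1)!\,2^{k+1}$ simplices. The same filtration argument on $\mathbb{S}^k/G=(\mathbb{S}^k/A)/Q$ reduces everything to bounding the Betti numbers of fibres. Each such fibre is $T^J/Q_\sigma$, where $Q_\sigma$ is the stabilizer of a simplex $\sigma$ acting on a torus by affine maps.

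The main obstacle is this last step: bounding $b_i(T^J/Q_\sigma,\mathbb{F})$ uniformly, since in positive characteristic transfer arguments fail for the $p$-part of $Q_\sigma$. I would first try to realize $T^J/Q_\sigma$ as $\mathbb{R}^J/\Lambda$ for a crystallographic group $\Lambda$. Then I would run an induction: pass to a bounded-index normal subgroup of $\Lambda$ that acts freely on a lower-dimensional torus or sphere factor, or linearize the $Q_\sigma$-action near fixed tori to get spherical quotients of dimension about $|J|/2$. Applying the theorem recursively to these spherical quotients, with each step costing a factor of roughly $(k+1)!$ and the dimension halving, would account for the exponent $\log(3k)$ in the final constant $3^k(k+1)!^{\log(3k)}$. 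If this direct approach is not enough, the fallback is to bound the number of cells of a $Q_\sigma$-equivariant CW structure on the torus, built as a product of equivariant circle structures after a bounded-index refinement, since a $Q$-CW structure with $c$ cells descends to one on the quotient with at most $c$ cells.
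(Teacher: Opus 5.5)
Your abelian step is correct and is essentially the paper's argument. The paper runs the Mayer--Vietoris spectral sequence of the cover $\bar\mu^{-1}\{u_\alpha>0\}$ instead of your skeletal filtration, and gets the same count $\sum_J 2^{|J|}\le 3^{k+1}$.

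\textbf{First gap: $Q$ need not act on $\Delta$.} The argument breaks when you pass from $A$ to $G$. You claim that because $G$ normalizes $A$ it permutes the planes $V_j$, so that $Q$ acts on $\Delta$; this is false. $G$ permutes the \emph{isotypic} components of $\mathbb{R}^{k+1}$ as an $A$-module. Your $V_j$, and the lines in $W$, are only irreducible summands, and as soon as a character of $A$ occurs with multiplicity, elements of $G$ mix them. Here is an example where every Jordan subgroup $A$ is unbounded and $Q$ is nontrivial:
\begin{itemize}
\item Take $G=\mu_N\cdot I^*\subset U(2)\subset SO(4)$, where $I^*$ is the binary icosahedral group and $N$ is even.
\item Every normal abelian subgroup of $G$ maps to a normal abelian subgroup of $A_5\subset PU(2)$, so it is scalar.
\item Hence $A$ acts on $\mathbb{C}^2$ by scalars, and $\mathbb{R}^4$ is a single isotypic component.
\item $I^*$ is perfect and $\mathbb{R}$-irreducible on $\mathbb{R}^4$, so $G$ permutes no splitting $V_1\oplus V_2$.
\item Therefore the torus fibration $(z_1,z_2)\mapsto |z_1|^2$ of the lens space $\mathbb{S}^3/A$ is not equivariant for any action of $Q\cong A_5$ on $\Delta^1$.
\end{itemize}
So the reduction to fibres $T^J/Q_\sigma$ with affine actions is not available in general. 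You could coarsen $\mu$ to the canonical map indexed by isotypic components. But then the fibres are quotients of products of spheres by scalar actions, and $Q_\sigma$ acts on them through unitary groups rather than affinely on tori. Your cell-counting fallback might handle genuinely affine torus actions, but this is not that situation.

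\textbf{Second gap: the characteristic-$p$ step is not carried out.} Even where your reduction is valid, the step you flag as the main obstacle remains open. This is bounding Betti numbers of a quotient by $Q_\sigma$ when $p$ divides $|Q_\sigma|$. Linearizing near fixed tori describes neighbourhoods of fixed sets only, and gives no control of the global homology of the quotient. Also, the exponent $\log(3k)$ does not come from a dimension-halving recursion.

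\textbf{The missing idea.} The paper uses a Smith-theoretic bound that needs no compatibility between $Q$ and any geometric structure on $X=\mathbb{S}^k/A$. It only uses that $X$ is a finite $k$-dimensional complex with $b_i(X)\le 3^{k+1}$. Let $C_p$ act admissibly on a finite $d$-dimensional complex $Y$ with $b_i(Y)\le\beta$, and let $\mathrm{char}\,\F=p$.
\begin{itemize}
\item The Smith--Floyd inequality gives $\sum_i b_i(Y^{C_p})\le\sum_i b_i(Y)$.
\item The relative chains $C_*(Y,Y^{C_p})$ are free $\F[C_p]$-modules. So the Cartan--Leray spectral sequence $H_a(C_p;H_b(Y,Y^{C_p}))\Rightarrow H_{a+b}(Y/C_p,Y^{C_p})$, together with $\dim H_a(C_p;M)\le\dim M$, bounds the relative part.
\item Combining the two gives $b_n(Y/C_p)\le 3(d+1)\beta$.
\end{itemize}
Now iterate this along a central series of a Sylow $p$-subgroup $P\le Q$. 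Then use the transfer injection $H_*(X/Q)\hookrightarrow H_*(X/P)$, valid since $p\nmid[Q:P]$. This gives $b_n(\mathbb{S}^k/G)\le (3(k+1))^{\log_p|Q|}\,3^{k+1}$. Jordan's bound on $|Q|$ then yields a constant depending only on $k$, and this is where the logarithm in the exponent comes from.
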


Once we prove this theorem, we use the methods developed by Gromov, Gelander and Samet to analyse the thick and thin parts of the quotient and bound their topological complexity in terms of the volume.\\


We will indicate where the assumption of negative curvature is used. It is quite likely that the result can be extended to orbifolds of non-positive curvature, hence completing all remaining cases of \ref{thm:Gromov}. 

\subsection{Finite Group Actions on Spheres}
It is natural to ask whether one can remove the \emph{linearity} hypothesis in
Theorem~\ref{thm:intro main topological theorem} by some general ``linearization''
or approximation principle for finite group actions on spheres.

In the purely topological category, the answer is generally no: there are well-known ``wild'' finite group actions on spheres which are not even locally linear. In dimension three, classic constructions of Bing~\cite{bing1952} and Montgomery--Zippin~\cite{montgomery1954examples} exhibit involutions on $S^3$ with pathological fixed sets (e.g.\ the Alexander horned sphere or a Cantor set). In higher dimensions, there are also topological actions by finite groups which do not embed into $\mathrm{O}(n+1)$; see, for instance, Zimmermann~\cite{zimmermann2017topological}.

In the smooth category the picture is more rigid in low dimensions. In dimension $3$, smooth finite group actions are governed by geometrization (and are smoothly conjugate to isometric actions); see e.g.\ \cite{dinkelbach2009equivariant} and the discussion around the Smith conjecture in \cite{morganbass1984}. In dimension $4$, there are strong linearization results in many cases; see \cite{ChenKwasikSchultz2016}. In higher dimensions, however, smooth finite group actions need not be conjugate to linear ones, and the distinction between linear and nonlinear actions becomes subtler.

Even for free actions, nonlinearity already appears for cyclic groups: there exist smooth free actions of $\Z/m$ on $S^{2k-1}$ which are not smoothly conjugate to any action arising from a unitary representation. The resulting quotients are the classical \emph{fake lens spaces}; see Browder--Petrie--Wall~\cite{BPW71} and the subsequent classification results surveyed and developed for example in Macko--Wegner~\cite{MackoWegner11}.

Moreover, there are smooth free actions on homotopy spheres whose orbit spaces are not even homotopy equivalent to any linear spherical quotient. Indeed, there exist free smooth actions of odd-order non-abelian metacyclic groups
\[
\pi \;=\; \Z/p \rtimes \Z/q
\]
on homotopy spheres, see Petrie~\cite{Petrie71} and the discussion in Hambleton's survey~\cite{Hambleton14}. On the other hand, Zassenhaus showed that any finite group admitting a free orthogonal action on a sphere must satisfy the \emph{$pq$-conditions} (every subgroup of order $pq$, for primes $p,q$, is cyclic), and conversely, solvable groups satisfying the $pq$-conditions admit a free orthogonal representation \cite{Hambleton14}. The above metacyclic groups fail the $pq$-conditions, hence admit no free linear action on any sphere. Therefore, if $M=S^{n}/\pi$ is such a quotient and $M\simeq S^{n}/H$ for some linear $H<\mathrm{O}(n+1)$, then $\pi_1(M)\cong \pi_1(S^{n}/H)\cong H$ would force $H\cong \pi$, contradicting the nonexistence of a free linear action by $\pi$.

\section*{Acknowledgments}
The authors would like to thank Shaul Ragimov for useful conversations. The second author wishes to thank Tsachik Gelander for his continued support and encouragement to work on this problem.

\section{Linear actions of finite groups on spheres}

Let $G\le SO(n)$ be a finite group acting linearly on the unit sphere $S^{n-1}\subset\mathbb{R}^n$.
Since the action is smooth, Illman's equivariant triangulation theorem yields a finite $G$--invariant triangulation of
$S^{n-1}$ \cite{Illman1978}. After barycentric subdivision, we may assume the induced simplicial $G$--action is
\emph{admissible} in the sense that if $g\in G$ preserves a simplex setwise then it fixes it pointwise
\cite[\S1]{PutmanSmith}. In particular, $S^{n-1}$ is a finite admissible $G$--CW complex; for every subgroup
$H\le G$ the fixed point set $(S^{n-1})^H$ is a subcomplex and the quotient $S^{n-1}/H$ inherits a finite CW structure
\cite[\S1]{AdemDavis}.

\subsection{Jordan reduction to a bounded extension of an abelian group}

By Jordan's theorem \cite{jordan1878memoire}, with explicit bounds due to Collins \cite{collins2007jordan}, there exists
a constant $J(n)$ depending only on $n$ such that every finite subgroup of $GL_n(\mathbb{C})$ contains a \emph{normal
abelian} subgroup of index at most $J(n)$. Applying this to $G\le SO(n)\subset GL_n(\mathbb{C})$, choose
\[
A\triangleleft G \ \ \text{abelian},\qquad [G:A]\le J(n),
\]
and set $Q:=G/A$, so $|Q|\le J(n)$. Let
\[
X:=S^{n-1}/A.
\]
Normality of $A$ implies that $Q$ acts on $X$, and
\[
S^{n-1}/G \ \cong\ X/Q.
\]
Thus, we will first give a bound for abelian actions (where the size of the group can be arbitrarily large), and then deal with the quotient by $Q$.

\subsection{A uniform bound in the abelian case}

We record an explicit uniform bound (depending only on $n$) for quotients by finite abelian subgroups of $SO(n)$.

\begin{thm}\label{thm:abelian_SO_betti_bound}
Let $A$ be a finite abelian subgroup of $SO(n)$ acting linearly on $S^{n-1}$, and let $\FF$ be any field. Then
\[
\sum_{i=0}^{n-1}\dim_{\FF}H^i(S^{n-1}/A;\FF)\ \le\ 3^n.
\]
\end{thm}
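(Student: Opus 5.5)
Since $A$ is a finite abelian subgroup of $SO(n)$, I will first simultaneously diagonalize it over $\mathbb{R}$: $\mathbb{R}^n$ splits $A$-invariantly as an orthogonal sum $V_1\oplus\cdots\oplus V_m$ of irreducible real $A$-modules, each of dimension $1$ or $2$. On a $1$-dimensional summand $A$ acts through a character to $\{\pm1\}$, and on a $2$-dimensional summand through rotations, i.e.\ through a character $A\to U(1)$. Thus $A$ lies in the maximal abelian subgroup $T=\prod_j O(V_j)\cap SO(n)$, which is a product of circles and sign groups. Then $S^{n-1}$ is the join $S(V_1)*\cdots*S(V_m)$ of circles and $0$-spheres, and $A$ acts factorwise. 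The plan is to stratify $S^{n-1}$ according to which coordinates $V_j$ vanish, and to show that each stratum contributes boundedly to the cohomology of the quotient.

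Concretely, for a subset $I\subseteq\{1,\dots,m\}$ let $U_I=\{x\in S^{n-1}: x_j\neq0 \iff j\in I\}$. Then $U_I\cong \Delta_I^\circ\times\prod_{j\in I}S(V_j)$, where $\Delta_I^\circ$ is an open simplex of the norms $(|x_j|)$. The action of $A$ preserves each $U_I$ and acts trivially on the simplex coordinate, so
\[
U_I/A\cong \Delta^\circ_I\times\Bigl(\prod_{j\in I}S(V_j)\Bigr)/A .
\]
Here $\prod_{j\in I} S(V_j)$ is a product of circles and two-point sets, and $A$ acts on it through its image $A_I$, a finite subgroup of the compact abelian group $T_I=\prod_{j\in I}O(V_j)$ acting by translation on the identity components. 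The quotient of a torus $T^k$ by a finite subgroup of translations is again a torus $T^k$. On the finite set of components, $A_I$ permutes a set of at most $2^{\#\{1\text{-dim }j\}}$ points. So $(\prod S(V_j))/A$ is a disjoint union of at most $2^{a}$ tori of dimension $b$, where $a$ and $b$ count the $1$- and $2$-dimensional summands in $I$, with $a+2b=\dim\bigoplus_{j\in I}V_j$. Over any field its total Betti number is therefore at most $2^a\cdot 2^b$.

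Next I will assemble the strata. Filter $S^{n-1}/A$ by the closed $A$-invariant subsets $F_k=\{x:\#\{j:x_j\ne0\}\le k\}$. Then $F_k\setminus F_{k-1}$ is the disjoint union of the $U_I/A$ with $|I|=k$. The long exact sequences in compactly supported cohomology (equivalently, for pairs of CW subcomplexes, using the admissible $G$-CW structure from the previous paragraph) give
\[
\sum_i\dim H^i(S^{n-1}/A;\FF)\le\sum_I\sum_i\dim H_c^i(U_I/A;\FF).
\]
Since $U_I/A$ is an open cell times a compact space, $H_c^*$ is a degree shift of $H^*$ of the compact factor, so it is bounded by $2^{a_I+b_I}$. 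Summing over all $I$, with each summand either included or not, gives $\prod_j(1+2)=3^m\le 3^n$. The choice of $2$ per summand can be refined, but it already gives the claimed bound.

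The step I expect to need the most care is the quotient-of-torus claim when $1$-dimensional summands are present. Elements of $A$ may act on the sign coordinates while simultaneously translating the torus, so the quotient is not a disjoint union of full tori. The honest statement is that each orbit of components $C$ contributes $C_0/\mathrm{Stab}_A(C_0)$, and the stabilizer still acts on the torus component $C_0$ by translations, because the sign coordinates are fixed on it. So each contribution is a torus, and the bound $2^a\cdot 2^b$ survives. I also need the constant $3^n$ to hold with $m\le n$ summands, which is automatic.
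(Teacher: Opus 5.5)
Your proof is correct, and it differs from the paper only in how the local pieces are assembled. The pieces themselves are the same. Your stratum $U_I=\{x:\ x_j\neq 0 \iff j\in I\}$ is exactly the paper's $W_J\cap S_J$. Your description of $\bigl(\prod_{j\in I}S(V_j)\bigr)/A$ as a union of tori, one for each $A$-orbit of sign components with the stabilizer acting by translations, is the paper's torus-components lemma.

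The paper uses the open cover $U_\alpha=\{v^{(\alpha)}\neq 0\}$. Its finite intersections are not products, so it needs the equivariant deformation retraction (shrinking the coordinates outside $J$) onto the stratum. It then runs the Mayer--Vietoris/\v{C}ech spectral sequence in ordinary cohomology. You instead filter by the closed sets $F_k$ and use the long exact sequences in compactly supported cohomology for a closed subset and its open complement. There each stratum is genuinely $\Delta^\circ_I\times K_I$ with $K_I$ compact, so $H^*_c$ is simply a degree shift of $H^*(K_I)$.

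Your route avoids the retraction lemma and the spectral sequence. The price is working with sheaf-theoretic (or Alexander--Spanier) $H^*_c$ on locally compact spaces, and its agreement with singular cohomology on the compact quotient and on the strata. The paper's route stays inside singular cohomology of open sets. Both arrive at $\sum_{I\neq\emptyset}2^{|I|}=3^m-1$.

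Two minor points. First, your parenthetical that one can \emph{equivalently} use CW pairs from the admissible $G$--CW structure requires a triangulation in which the $F_k$ are subcomplexes. The $F_k$ are finite unions of the invariant great subspheres $S^{n-1}\cap\bigoplus_{j\in J}V_j$, so such a triangulation can be chosen, but it is not automatic; it is cleaner to rely on $H^*_c$ directly. Second, for the $2$--dimensional blocks the relevant group is $SO(V_j)$, not $O(V_j)$, since $O(2)$ is not abelian. This is harmless, because you correctly use that $A$ acts by rotations there.
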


\begin{proof}
Set $X:=S^{n-1}/A$. Since $A\subset O(n)$ is finite abelian, its elements are commuting orthogonal operators on
$\mathbb{R}^n$, hence $\mathbb{R}^n$ decomposes as an orthogonal direct sum of $A$--invariant real irreducible
subrepresentations. Over $\mathbb{R}$ these are either $1$--dimensional sign representations or $2$--dimensional
rotation blocks. Thus, there is an orthogonal decomposition
\[
\mathbb{R}^n \cong \Bigl(\bigoplus_{j=1}^r V_j\Bigr)\oplus \Bigl(\bigoplus_{k=1}^s W_k\Bigr),
\qquad \dim V_j=2,\ \dim W_k=1,
\]
such that $A$ acts on each $V_j$ by rotations (via a character $\chi_j:A\to S^1$ after identifying $V_j\cong\mathbb{C}$)
and on each $W_k$ by multiplication by $\pm1$ (via a character $\varepsilon_k:A\to\{\pm1\}$). In particular $2r+s=n$.
Let $N:=r+s$ and index the blocks by $B_\alpha$ ($\alpha=1,\dots,N$), where $B_\alpha=V_\alpha$ for $\alpha\le r$ and
$B_{r+k}=W_k$ for $1\le k\le s$. Then $N\le n$, and every $v\in\mathbb{R}^n$ decomposes uniquely as
$v=(v^{(1)},\dots,v^{(N)})$ with $v^{(\alpha)}\in B_\alpha$.

Define a continuous map $\mu:S^{n-1}\to\Delta^{N-1}$ by
\[
\mu(v)=\bigl(\|v^{(1)}\|^2,\dots,\|v^{(N)}\|^2\bigr),
\]
where $\Delta^{N-1}=\{(u_1,\dots,u_N)\in\mathbb{R}^N:\ u_i\ge0,\ \sum_i u_i=1\}$ is the standard simplex.
Because $A$ acts orthogonally on each block, it preserves $\|v^{(\alpha)}\|$, hence $\mu$ is $A$--invariant and descends
to a continuous map $\bar\mu:X\to\Delta^{N-1}$.

For each $\alpha\in\{1,\dots,N\}$ let $\mathcal{V}_\alpha:=\{u\in\Delta^{N-1}:\ u_\alpha>0\}$, so that
$\{\mathcal{V}_\alpha\}$ is an open cover of $\Delta^{N-1}$, and set $U_\alpha:=\bar\mu^{-1}(\mathcal{V}_\alpha)$, an
open cover of $X$. For a nonempty $J\subseteq\{1,\dots,N\}$ write
\[
\mathcal{V}_J:=\bigcap_{\alpha\in J}\mathcal{V}_\alpha,
\qquad
U_J:=\bigcap_{\alpha\in J}U_\alpha=\bar\mu^{-1}(\mathcal{V}_J).
\]
Upstairs, let $W_J:=\mu^{-1}(\mathcal{V}_J)\subset S^{n-1}$, so that $U_J=W_J/A$. Let
\[
E_J:=\bigoplus_{\alpha\in J}B_\alpha\subset\mathbb{R}^n,
\qquad
S_J:=S^{n-1}\cap E_J.
\]

\begin{lem}\label{lem:WJ_retract}
The inclusion $W_J\cap S_J\hookrightarrow W_J$ is an $A$--equivariant deformation retraction.
\end{lem}

\begin{proof}
Write $v=(v_J,v_{J^c})$ with $v_J\in E_J$ and $v_{J^c}\in E_{J^c}$. If $v\in W_J$, then each block in $J$ is nonzero,
hence $v_J\neq 0$. Define
\[
H_t(v):=\frac{(v_J,(1-t)v_{J^c})}{\|(v_J,(1-t)v_{J^c})\|}\qquad (t\in[0,1]).
\]
The denominator never vanishes since $v_J\neq 0$, and scaling $v_{J^c}$ does not affect the nonvanishing of blocks in
$J$, so $H_t(v)\in W_J$ for all $t$. Moreover $H_1(v)\in W_J\cap S_J$. The construction uses only orthogonal
projections and norms, hence is $A$--equivariant.
\end{proof}

By Lemma~\ref{lem:WJ_retract}, passing to quotients gives $U_J=W_J/A\simeq (W_J\cap S_J)/A$.

\begin{lem}\label{lem:WJ_model}
Let $a=a(J)$ be the number of $2$--dimensional blocks among $\{B_\alpha\}_{\alpha\in J}$ and $b=b(J)$ the number of
$1$--dimensional blocks among them. Then there is an $A$--equivariant homeomorphism
\[
W_J\cap S_J \cong (S^1)^a \times (S^0)^b \times \Delta^{a+b-1}_{>0},
\]
where $\Delta^{a+b-1}_{>0}=\{(t_1,\dots,t_{a+b})\in\mathbb{R}^{a+b}:\ t_i>0,\ \sum_i t_i=1\}$ is the open simplex.
The $A$--action is by translations on $(S^1)^a$ and $(S^0)^b$, and is trivial on $\Delta^{a+b-1}_{>0}$.
\end{lem}

\begin{proof}
On each complex block write $z=re^{i\theta}$ with $r>0$ and $\theta\in S^1$, and on each real block write
$x=\sigma s$ with $\sigma\in S^0=\{\pm1\}$ and $s>0$. The defining conditions for $W_J\cap S_J$ are precisely that all
$r,s$ are positive and that $\sum r^2+\sum s^2=1$. Thus $(r_1^2,\dots,r_a^2,s_1^2,\dots,s_b^2)$ determines a point of
$\Delta^{a+b-1}_{>0}$, while the angles and signs determine a point of $(S^1)^a\times(S^0)^b$, giving the stated
homeomorphism. The $A$--action rotates each $S^1$ factor and flips each $S^0$ factor via the corresponding characters,
while preserving radii, so the simplex factor is fixed.
\end{proof}

Since $\Delta^{a+b-1}_{>0}$ is contractible and $A$ acts trivially on it, Lemma~\ref{lem:WJ_model} yields
\[
U_J \simeq \bigl((S^1)^a\times (S^0)^b\bigr)/A.
\]

\begin{lem}\label{lem:torus_components}
The space $\bigl((S^1)^a\times (S^0)^b\bigr)/A$ is a disjoint union of at most $2^b$ components, each homotopy
equivalent to a torus of dimension $a$.
\end{lem}

\begin{proof}
Let $D:=(S^0)^b$, a discrete set of size $2^b$. The $A$--action on $(S^1)^a\times D$ has the form
$g\cdot(t,d)=(t+\phi(g),\, d+\psi(g))$ for homomorphisms $\phi:A\to(S^1)^a$ and $\psi:A\to D$. Hence the quotient
decomposes into connected components indexed by $D/A$, so the number of components is at most $|D|=2^b$.
Fix an orbit $\mathcal{O}\subset D$ and $d\in\mathcal{O}$. The stabilizer $A_d$ acts on $(S^1)^a$ by translations
through the finite subgroup $\phi(A_d)\subset (S^1)^a$, and $(S^1)^a/\phi(A_d)$ is again a torus of dimension $a$.
\end{proof}

Lemma~\ref{lem:torus_components} implies that over any field $\FF$, each component contributes total Betti number $2^a$,
hence
\[
\sum_q \dim_{\FF} H^q(U_J;\FF) \le 2^b\cdot 2^a = 2^{a+b}=2^{|J|}.
\]

Now apply the \v{C}ech (equivalently, Mayer--Vietoris) spectral sequence for the finite open cover
$\mathcal{U}=\{U_\alpha\}_{\alpha=1}^N$ with coefficients in $\FF$:
\[
E_1^{p,q}=\bigoplus_{\substack{J\subseteq\{1,\dots,N\}\\ |J|=p+1}} H^q(U_J;\FF)
\ \Longrightarrow\ H^{p+q}(X;\FF).
\]
Taking dimensions over a field and using that differentials can only decrease total dimension yields
\[
\begin{aligned}
\sum_i \dim_{\FF} H^i(X;\FF)
&\le \sum_{p,q}\dim_{\FF}E_1^{p,q} \\
&= \sum_{\emptyset\neq J\subseteq\{1,\dots,N\}} \ \sum_q \dim_{\FF} H^q(U_J;\FF) \\
&\le \sum_{\emptyset\neq J\subseteq\{1,\dots,N\}} 2^{|J|}.
\end{aligned}
\]
Finally,
\[
\sum_{\emptyset\neq J\subseteq\{1,\dots,N\}} 2^{|J|}
= \sum_{k=1}^N \binom{N}{k}2^k
= (1+2)^N-1
= 3^N-1
\le 3^N
\le 3^n,
\]
since $N\le n$.
\end{proof}

\subsection{A characteristic-$p$ bound for finite CW complexes}\label{subsec:orbit-space-bound}

Throughout this subsection, $\FF$ denotes a field of characteristic $p$ and we write
\[
b_i(Y):=\dim_{\FF}H_i(Y;\FF).
\]
We denote by $C_p$ the cyclic group of order $p$. We will also interchange homology and cohomology since over a field one has $\dim_{\FF}H_i(Y;\FF)=\dim_{\FF}H^i(Y;\FF)$ by universal coefficients.

\subsubsection{The cyclic case}

\begin{thm}\label{thm:orbit_cyclic}
Let $C_p$ act admissibly on a finite $d$--dimensional CW complex $Y$. Assume $b_i(Y)\le k$ for all $i$.
Then for all $n \geq 0$ one has
\[
b_n(Y/C_p)\ \le\ 3(d+1)k
\]
\end{thm}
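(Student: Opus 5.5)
We split according to whether $\operatorname{char}\FF$ equals the order of the group. Write $G=C_p$ with generator $g$.

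\emph{Coprime case.} If the characteristic of $\FF$ is not $p$, then $|G|$ is invertible in $\FF$. The transfer then identifies $H_*(Y/G;\FF)$ with the coinvariants $H_*(Y;\FF)_G$, which gives $b_n(Y/G)\le b_n(Y)\le k$ at once.

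\emph{Case $\operatorname{char}\FF=p$: Smith theory.} Every field of characteristic $p$ contains $\F_p$, and dimensions of homology are unchanged under extension of the coefficient field. So we may assume $\FF=\F_p$.

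Let $C=C_*(Y;\F_p)$ be the cellular chain complex. By admissibility it is a complex of $\F_p[G]$-modules, and each cell is either fixed by $G$ or lies in a free orbit. Put
\[
\sigma=1+g+\dots+g^{p-1},\qquad \tau=1-g .
\]
Since $\tau^{p-1}=\sigma$ in $\F_p[G]$, the pair $\rho=\tau^{j}$, $\bar\rho=\tau^{p-j}$ satisfies the usual Smith relations. The Smith sequences are the short exact sequences of chain complexes
\[
0\to \bar\rho C\oplus C(F)\to C\to \rho C\to 0 ,
\]
where $F=Y^G$ is the fixed subcomplex.

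We use the two sequences with $\rho=\sigma$ and with $\rho=\tau^{p-1}$ replaced by $\tau$, which exchange the roles of $\sigma C$ and $\tau C$. The key identification is $\sigma C\cong C_*(Y/G,F;\F_p)$, the relative chains of the orbit space. This holds because $\sigma$ sends a free orbit of cells to the corresponding cell of $Y/G$ and kills fixed cells.

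\emph{Bounding the relative homology.} From the long exact sequences we get
\[
b_n(\sigma C)\le b_n(Y)+b_{n-1}(\tau C)+b_{n-1}(F),
\qquad
b_{m}(\tau C)\le b_{m}(Y)+b_{m-1}(\sigma C)+b_{m-1}(F).
\]
Alternating these two inequalities downward in degree until degree $<0$ gives
\[
b_n(\sigma C)\le \sum_{j\le n} b_j(Y)+\sum_{j<n} b_j(F)\le (d+1)k+\sum_j b_j(F).
\]
The Smith inequality, which follows from the same sequences by the standard argument, gives
\[
\sum_j b_j(F;\F_p)\le \sum_j b_j(Y;\F_p)\le (d+1)k .
\]
Hence $b_n(Y/G,F)\le 2(d+1)k$.

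\emph{Passing to $Y/G$.} Finally, $F$ embeds in $Y/G$ as a subcomplex. The long exact sequence of the pair $(Y/G,F)$ yields
\[
b_n(Y/G)\le b_n(F)+b_n(Y/G,F)\le (d+1)k+2(d+1)k=3(d+1)k .
\]

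\emph{Main obstacle.} The delicate point is the exactness of the Smith sequences together with the identification $\sigma C\cong C(Y/G,F)$ for odd $p$. Here one must check that $\ker\rho=\bar\rho C\oplus C(F)$ on free orbits, where $C$ is a sum of copies of $\F_p[G]$ and $\ker\tau^{j}=\tau^{p-j}\F_p[G]$, and separately on the fixed cells. I would verify this cell-orbit by cell-orbit, using admissibility so that no cell is mapped to itself non-identically.
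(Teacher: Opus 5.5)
Your argument is correct, but it handles the central step, bounding $b_n(Y/C_p,F)$ with $F=Y^{C_p}$, differently from the paper.

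The outer frame is the same in both proofs. Each uses the Smith--Floyd inequality $\sum_j b_j(F)\le\sum_j b_j(Y)\le (d+1)k$, and each finishes with the long exact sequence of the pair $(Y/C_p,F)$.

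The paper bounds the relative term in two steps. First, the long exact sequence of $(Y,F)$ gives $\sum_t b_t(Y,F)\le 2(d+1)k$. Then, since $C_*(Y,F;\FF)$ is $\FF[C_p]$-free, it runs the relative Cartan--Leray spectral sequence $H_a\bigl(C_p;H_b(Y,F)\bigr)\Rightarrow H_{a+b}(Y/C_p,F)$. It combines this with $\dim H_a(C_p;M)\le\dim M$, which comes from the $2$-periodic resolution.

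You instead stay at chain level. You identify $C_*(Y/C_p,F;\F_p)$ with $\sigma C$ and play the Smith sequences for $(\rho,\bar\rho)=(\sigma,\tau)$ and $(\tau,\sigma)$ off against each other. Your justification of the identification is a little informal. It is cleanest stated like this: $\sigma$ kills $\tau C$, so it factors through the coinvariants $C/\tau C\cong C_*(Y/C_p;\F_p)$. By exactness of the first Smith sequence, the induced surjection onto $\sigma C$ has kernel $(\tau C\oplus C(F))/\tau C$, which is exactly the image of $C_*(F)$. This makes it visibly an isomorphism of chain complexes $\sigma C\cong C_*(Y/C_p,F;\F_p)$.

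Your route is more elementary and self-contained. It needs no spectral sequence or group homology, and the same two Smith sequences also prove the Smith inequality you invoke. The paper, by contrast, cites that inequality and handles the free part of the action through group homology of $C_p$.

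The coprime case you include is not part of the statement, since in this subsection the characteristic equals the order of the group, but it does no harm.
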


\begin{proof}
Since $Y/C_p$ is $d$--dimensional, we have $H_n(Y/C_p;\FF)=0$ for $n>d$.

By the Smith--Floyd inequality for $p$--groups (over $\mathbb{F}_p$) \cite[Theorem B(i)]{PutmanSmith},
\begin{equation}\label{eq:cyclic-smith-floyd}
\sum_{i\ge 0}\dim_{\mathbb{F}_p}H_i(Y^{C_p};\mathbb{F}_p)
\ \le\
\sum_{i\ge 0}\dim_{\mathbb{F}_p}H_i(Y;\mathbb{F}_p).
\end{equation}
Since $\FF$ is flat over $\mathbb{F}_p$, extension of scalars gives
$H_i(Z;\FF)\cong H_i(Z;\mathbb{F}_p)\otimes_{\mathbb{F}_p}\FF$ for any CW complex $Z$
(cf.\ \cite[Thm.~3.6.1]{Weibel}), hence
$\dim_{\FF}H_i(Z;\FF)=\dim_{\mathbb{F}_p}H_i(Z;\mathbb{F}_p)$.
Therefore,
\[
\sum_i b_i(Y^{C_p})\ \le\ \sum_i b_i(Y)\ \le\ (d+1)k.
\]
In particular, for every $i\ge 0$,
\[
b_i(Y^{C_p})\ \le\ (d+1)k.
\]

From the long exact sequence of the pair $(Y,Y^{C_p})$ \cite[Thm.~2.16]{HatcherAT}, for $t\ge 0$,
\[
b_t(Y,Y^{C_p})
\ \le\ b_t(Y)+b_{t-1}(Y^{C_p})
\ \le\ k+(d+1)k
\ =\ (d+2)k,
\]
with $b_{-1}(Y^{C_p})=0$. Summing over $t=0,\dots,d$ gives
\[
\sum_{t=0}^d b_t(Y,Y^{C_p})
\le \sum_{t=0}^d b_t(Y)+\sum_{t=0}^{d-1} b_t(Y^{C_p})
\le (d+1)k+(d+1)k
=2(d+1)k.
\]

Since every cell of $Y\setminus Y^{C_p}$ has trivial stabilizer, each $C_q(Y,Y^{C_p};\FF)$ is a free
$\FF[C_p]$--module (compare \cite[(1.3)]{BrownLectures}), and taking coinvariants gives a chain isomorphism
\[
\FF\otimes_{\FF[C_p]} C_*(Y,Y^{C_p};\FF)\ \cong\ C_*(Y/C_p,Y^{C_p};\FF)
\]
(cf.\ \cite[Example~5.2(c)]{BrownLectures}). Applying the Hypertor spectral sequence
\cite[\S5.7, Prop.~5.7.6 and App.~5.7.8]{Weibel} yields the (relative) Cartan--Leray spectral sequence
\cite[(5.5)]{BrownLectures}:
\begin{equation}\label{eq:cyclic-cartan-leray}
E^2_{a,b}=H_a\!\bigl(C_p;\,H_b(Y,Y^{C_p};\FF)\bigr)\ \Longrightarrow\ H_{a+b}(Y/C_p,Y^{C_p};\FF).
\end{equation}

For any finite-dimensional $\FF[C_p]$--module $M$, the standard $2$--periodic resolution shows $H_a(C_p;M)$ is a
subquotient of $M$ (see \cite[Example~1.3, (1.9) and Example~3.2]{BrownLectures}), hence
$\dim_{\FF}H_a(C_p;M)\le \dim_{\FF}M$ for all $a\ge 0$.
Consequently, for $0\le n\le d$,
\begin{align*}
b_n(Y/C_p,Y^{C_p})
&\le \sum_{a=0}^n \dim_{\FF}E^2_{a,n-a}
 \le \sum_{a=0}^n b_{n-a}(Y,Y^{C_p})
 = \sum_{t=0}^n b_t(Y,Y^{C_p}) \\
&\le \sum_{t=0}^d b_t(Y,Y^{C_p})
 \le 2(d+1)k.
\end{align*}

Finally, the long exact sequence of the pair $(Y/C_p,Y^{C_p})$ \cite[Thm.~2.16]{HatcherAT} contains the segment
\[
H_n(Y^{C_p};\FF)\longrightarrow H_n(Y/C_p;\FF)\longrightarrow H_n(Y/C_p,Y^{C_p};\FF),
\]
so
\begin{align*}
b_n(Y/C_p)
&\le b_n(Y^{C_p})+b_n(Y/C_p,Y^{C_p}) \\
&\le (d+1)k+2(d+1)k
=3(d+1)k,
\end{align*}
as required.
\end{proof}

\subsubsection{From $C_p$ to $p$--groups}

\begin{lem}\label{lem:pgroup_series}
Let $P$ be a finite $p$--group of order $|P|=p^r$. Then there exists a normal series
\[
1=P_0\ \triangleleft\ P_1\ \triangleleft\ \cdots\ \triangleleft\ P_r=P
\]
such that $P_i/P_{i-1}\cong C_p$ for each $i$.
\end{lem}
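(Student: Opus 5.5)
We argue by induction on $r$, using the standard fact that a nontrivial finite $p$--group has nontrivial center. Note that ``normal series'' here has to be read as a subnormal series, each $P_{i-1}$ normal in $P_i$. That is all the subsequent induction on quotients $Y/P_i$ needs: $P_i/P_{i-1}\cong C_p$ acts on $Y/P_{i-1}$, and $Y/P_i\cong (Y/P_{i-1})/(P_i/P_{i-1})$. Even so, the construction below gives a stronger statement in which every $P_i$ is normal in all of $P$. For $r=0$ the trivial series $1=P_0=P$ works.

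For the inductive step, let $|P|=p^r$ with $r\ge 1$. The class equation $|P|=|Z(P)|+\sum[P:C_P(x)]$ has every non-central term divisible by $p$, which forces $p\mid |Z(P)|$. Pick $z\in Z(P)$ of order $p$ (take a power of any nontrivial central element) and set $P_1:=\langle z\rangle\cong C_p$. Being central, $P_1$ is normal in $P$. The quotient $\bar P:=P/P_1$ has order $p^{r-1}$, so by induction it has a series $1=\bar P_0\triangleleft\cdots\triangleleft\bar P_{r-1}=\bar P$ with cyclic factors of order $p$, and we may take each $\bar P_j$ normal in $\bar P$.

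Define $P_{j+1}:=\pi^{-1}(\bar P_j)$, where $\pi:P\to\bar P$ is the projection. This gives $P_1=\pi^{-1}(1)$ and $P_r=P$. By the correspondence theorem, normality in $\bar P$ pulls back to normality in $P$, and $P_{j+1}/P_j\cong\bar P_j/\bar P_{j-1}\cong C_p$ by the third isomorphism theorem. Prepending $P_0=1$ gives the desired series, of length exactly $r$ since each step multiplies the order by $p$.

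The only nonformal input is the existence of a central element of order $p$, which comes from the class equation. Everything else is bookkeeping with the isomorphism theorems, so there is no real obstacle.
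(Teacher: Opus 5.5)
Your proof is correct and follows the same route as the paper: use the class equation to find a central subgroup $P_1$ of order $p$, apply induction to $P/P_1$, and pull the series back along the projection. Running the induction on the stronger statement (each $P_i$ normal in all of $P$) is harmless, and it is not needed, since a subnormal series in $P/P_1$ also pulls back to one in $P$.
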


\begin{proof}
Since $Z(P)$ is nontrivial, it contains an element of order $p$, hence a subgroup $P_1\le Z(P)$ of order $p$.
Then $P_1\triangleleft P$ and $|P/P_1|=p^{r-1}$. Apply induction to $P/P_1$ and take preimages in $P$.
\end{proof}

\begin{thm}\label{thm:orbit_pgroup}
Let $P$ be a finite $p$--group of order $|P|=p^r$ acting admissibly on a finite $d$--dimensional CW complex $Y$.
Assume $b_i(Y)\le k$ for all $i$. Then for all $n$,
\[
b_n(Y/P)\ \le\ \bigl(3(d+1)\bigr)^r\,k.
\]
\end{thm}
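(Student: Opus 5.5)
We argue by induction on $r$, using the normal series $1=P_0\triangleleft P_1\triangleleft\cdots\triangleleft P_r=P$ with $P_i/P_{i-1}\cong C_p$ supplied by Lemma~\ref{lem:pgroup_series}. The plan is to peel off one cyclic factor at a time and apply Theorem~\ref{thm:orbit_cyclic} at each stage. The case $r=0$ is trivial, since then $Y/P=Y$.

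For the inductive step, the series of Lemma~\ref{lem:pgroup_series} starts with a central subgroup $P_1\cong C_p$. We first apply Theorem~\ref{thm:orbit_cyclic} to the restricted action of $P_1$ on $Y$. This action is still admissible, because admissibility is a condition on individual group elements fixing simplices/cells, and it passes to subgroups. The result is
\[
b_n(Y/P_1)\le 3(d+1)k \quad\text{for all } n .
\]
Next, since $P_1\triangleleft P$, the quotient $\bar P:=P/P_1$, of order $p^{r-1}$, acts on $Y_1:=Y/P_1$, and
\[
Y/P\cong Y_1/\bar P .
\]
We then check the hypotheses needed to apply the inductive hypothesis to $\bar P$ acting on $Y_1$, with $k$ replaced by $k_1:=3(d+1)k$:
\begin{itemize}
\item $Y_1$ is a finite CW complex of dimension at most $d$. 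Its cells are the $P_1$-orbits of cells of $Y$, and the quotient CW structure is the one recorded from \cite[\S1]{AdemDavis}.
\item The $\bar P$-action on $Y_1$ is cellular.
\end{itemize}
Granting these, induction gives
\[
b_n(Y/P)\le (3(d+1))^{r-1}\cdot 3(d+1)k=(3(d+1))^r k .
\]

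The main obstacle is checking that the induced action of $\bar P$ on $Y_1$ is again admissible. Suppose $gP_1$ maps a cell $P_1\sigma$ of $Y_1$ to itself. Then $g\sigma=h\sigma$ for some $h\in P_1$, so $h^{-1}g$ preserves $\sigma$ setwise. By admissibility upstairs, $h^{-1}g$ fixes $\sigma$ pointwise, and hence $gP_1$ fixes the image cell pointwise.

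If the cell structure on $Y_1$ turns out to be delicate, the fallback is to work throughout with the simplicial structure. First replace $Y$ by its barycentric subdivision, as in the setup of the section. Its dimension and Betti numbers are unchanged, and the quotients are then genuine simplicial complexes on which the residual group acts admissibly, as in \cite[\S1]{PutmanSmith}.
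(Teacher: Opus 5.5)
Your proof is correct and is essentially the paper's argument: the paper also iterates Theorem~\ref{thm:orbit_cyclic} along the series of Lemma~\ref{lem:pgroup_series}, setting $K_i=\max_n b_n(Y/P_i)$ and obtaining $K_i\le 3(d+1)K_{i-1}$, which is your induction unrolled. Your explicit check that the induced action of $P/P_1$ on $Y/P_1$ is again admissible fills in a step the paper uses without comment, and it makes your barycentric-subdivision fallback unnecessary.
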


\begin{proof}
Let $1=P_0\triangleleft\cdots\triangleleft P_r=P$ be as in Lemma~\ref{lem:pgroup_series} and set $Y_i:=Y/P_i$.
Since $P_{i-1}\triangleleft P_i$, the quotient $Q_i:=P_i/P_{i-1}\cong C_p$ acts on $Y_{i-1}$ and
$Y_i\cong Y_{i-1}/Q_i$. Set $K_i:=\max_{0\le n\le d} b_n(Y_i)$. By Theorem~\ref{thm:orbit_cyclic} applied to the
$C_p$--action of $Q_i$ on $Y_{i-1}$ (with $k$ replaced by $K_{i-1}$), we have
$K_i\le 3(d+1)\,K_{i-1}$. Iterating gives $K_r\le (3(d+1))^r K_0\le (3(d+1))^r k$.
\end{proof}

\subsubsection{General finite groups via transfer}

\begin{lem}\label{lem:orbit_transfer}
Let $H$ be a finite group acting admissibly on a finite CW complex $Y$, and let $P\in\mathrm{Syl}_p(H)$.
There is a transfer map $H_*(Y/H;\FF)\to H_*(Y/P;\FF)$ whose composition with the map induced by $Y/P\to Y/H$ is
multiplication by $[H:P]$. In particular, $H_*(Y/H;\FF)\hookrightarrow H_*(Y/P;\FF)$ is injective.
\end{lem}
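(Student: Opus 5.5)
We will build the transfer at the chain level, using the admissible $H$--CW structure on $Y$. Admissibility means that if an element of $H$ maps a cell to itself, it fixes that cell pointwise. Consequently the cells of $Y/H$ are exactly the $H$--orbits of cells of $Y$, and the cells of $Y/P$ are the $P$--orbits. The cellular chain complexes are therefore
\[
C_*(Y/H;\FF)\cong \FF\otimes_{\FF[H]}C_*(Y;\FF),\qquad C_*(Y/P;\FF)\cong \FF\otimes_{\FF[P]}C_*(Y;\FF).
\]
Under these identifications, the projection $\pi\colon Y/P\to Y/H$ induces the natural map on coinvariants, which sends the class of $\sigma$ to the class of $\sigma$. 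Concretely, $\pi$ maps each $P$--orbit of cells to the $H$--orbit containing it. Orientations cause no trouble, because an element of $H$ that stabilizes a cell fixes it pointwise and hence preserves its orientation.

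Next we define $\tau\colon C_*(Y/H;\FF)\to C_*(Y/P;\FF)$ by
\[
\tau([\sigma]_H)=\sum_{gP\in H/P}[g^{-1}\sigma]_P ,
\]
where $g$ runs over a set of coset representatives. A cleaner description is $\tau([c])=[\sum_{g\in H/P}g^{-1}c]$, the standard transfer between coinvariants. We need to check three things.
\begin{enumerate}
\item $\tau$ is well defined, i.e.\ independent of the coset representatives and of the choice of $\sigma$ within its $H$--orbit. Changing the representative $g$ to $gp$ replaces $g^{-1}\sigma$ by $p^{-1}g^{-1}\sigma$, which has the same class in $P$--coinvariants. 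Replacing $\sigma$ by $h\sigma$ permutes the cosets $gP\mapsto h^{-1}gP$.
\item $\tau$ is a chain map. This holds because the boundary map of $C_*(Y;\FF)$ is $H$--equivariant.
\item $\pi_*\circ\tau$ is multiplication by $[H:P]$. Each of the $[H:P]$ summands maps to $[\sigma]_H$.
\end{enumerate}
Passing to homology gives a map $\tau_*\colon H_*(Y/H;\FF)\to H_*(Y/P;\FF)$ with $\pi_*\circ\tau_*=[H:P]\cdot\mathrm{id}$.

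For the injectivity statement, recall that $P$ is a Sylow $p$--subgroup, so $[H:P]$ is prime to $p=\mathrm{char}\,\FF$ and is therefore a unit in $\FF$. Then $\pi_*\circ\tau_*$ is an isomorphism, so $\tau_*$ is injective. (If $\mathrm{char}\,\FF=0$, one may take $P=1$ or read the statement for any $p$; the index is again invertible.)

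The only delicate point is the identification of the cellular chains of the orbit spaces with coinvariants, including the orientation signs. This is exactly where admissibility is used, and we cite \cite[\S1]{AdemDavis} and \cite[Example~5.2(c)]{BrownLectures} for it. Everything else is formal.
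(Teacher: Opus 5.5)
Your proposal is correct and follows the same route as the paper. The paper cites Adem--Davis for the chain-level transfer obtained by averaging over coset representatives, notes that $\pi_*\circ\tau_*=[H:P]$, and deduces injectivity from the invertibility of $[H:P]$ in $\FF$; you have written that construction out explicitly as the transfer $C_*(Y;\FF)_H\to C_*(Y;\FF)_P$ on coinvariants, using admissibility to identify these with the cellular chains of $Y/H$ and $Y/P$.
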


\begin{proof}
Adem--Davis construct a chain-level transfer $C_*(Y/H)\to C_*(Y/P)$ by averaging over coset representatives and show
that composing with the projection $C_*(Y/P)\to C_*(Y/H)$ induces multiplication by $[H:P]$ on homology
\cite[\S2.7]{AdemDavis}. Since $p\nmid [H:P]$, the scalar $[H:P]$ is invertible in $\FF$, so the transfer is injective.
\end{proof}

\begin{thm}\label{thm:orbit_finite_bound}
Let $\FF$ be a field of characteristic $p$. Let $Y$ be a finite $d$--dimensional CW complex equipped with an admissible
cellular action of a finite group $H$. Assume $b_i(Y)\le k$ for all $i$. Then $b_n(Y/H)=0$ for $n>d$, and for all $n$,
\[
b_n(Y/H)\ \le\ \bigl(3(d+1)\bigr)^{\,\lfloor\log_p(|H|)\rfloor}\,k
\ \le\ |H|^{\log_p(3(d+1))}\,k.
\]
\end{thm}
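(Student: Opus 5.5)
The argument combines three earlier results: the transfer of Lemma~\ref{lem:orbit_transfer}, the $p$--group bound of Theorem~\ref{thm:orbit_pgroup}, and an elementary comparison of exponents.

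The vanishing statement is immediate. The admissible cellular action gives $Y/H$ a CW structure whose cells are the orbits of cells of $Y$. Hence $Y/H$ has no cells above dimension $d$, and $H_n(Y/H;\FF)=0$ for $n>d$.

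For the main bound, fix $P\in\mathrm{Syl}_p(H)$ and write $|P|=p^r$. By Lemma~\ref{lem:orbit_transfer}, the map $H_n(Y/H;\FF)\to H_n(Y/P;\FF)$ is injective, so $b_n(Y/H)\le b_n(Y/P)$. The restriction of the $H$--action to $P$ is still admissible and cellular, so Theorem~\ref{thm:orbit_pgroup} applies and gives $b_n(Y/P)\le (3(d+1))^r k$. Since $p^r$ divides $|H|$, we have $p^r\le |H|$, hence $r\le \log_p|H|$, and since $r$ is an integer, $r\le\lfloor\log_p|H|\rfloor$. Because $3(d+1)\ge 3>1$, the quantity $(3(d+1))^r$ is monotone in $r$, which yields the first inequality.

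For the second inequality, drop the floor and rewrite the power:
\[
(3(d+1))^{\log_p|H|}=e^{\ln|H|\,\ln(3(d+1))/\ln p}=|H|^{\log_p(3(d+1))}.
\]

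Two degenerate cases need a remark. If $p=0$, or if $p\nmid|H|$, then $P$ is trivial and $r=0$. The transfer then shows $H_*(Y/H;\FF)$ injects into $H_*(Y;\FF)$, giving $b_n(Y/H)\le k$, which is consistent with the bound. When $p=0$ the expression $\log_p$ is not meaningful, so I would either state that this case reduces to the transfer with $P=1$, or note that the section assumes $p>0$ throughout.

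I do not expect a real obstacle, since the heavy lifting sits in Theorem~\ref{thm:orbit_pgroup} and Lemma~\ref{lem:orbit_transfer}. The only points to check are that restriction to $P$ preserves admissibility, which holds because it is a condition on individual group elements, and the arithmetic identity above.
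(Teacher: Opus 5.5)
Your proof is correct and follows the paper's argument: pass to a Sylow $p$-subgroup $P$ using the transfer injection of Lemma~\ref{lem:orbit_transfer}, apply Theorem~\ref{thm:orbit_pgroup} to $P$, and use $r\le\lfloor\log_p|H|\rfloor\le\log_p|H|$. Your extra remarks fill in details the paper leaves implicit: the vanishing above degree $d$ from the induced CW structure, admissibility passing to $P$, and the identity $(3(d+1))^{\log_p|H|}=|H|^{\log_p(3(d+1))}$.
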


\begin{proof}
Let $P\in\mathrm{Syl}_p(H)$, so $|P|=p^r$ with $r\le \lfloor\log_p(|H|)\rfloor$. By Lemma~\ref{lem:orbit_transfer},
$b_n(Y/H)\le b_n(Y/P)$ for all $n$. Applying Theorem~\ref{thm:orbit_pgroup} to $P$ gives
\[
b_n(Y/H)\ \le\ b_n(Y/P)\ \le\ \bigl(3(d+1)\bigr)^r\,k
\ \le\ \bigl(3(d+1)\bigr)^{\lfloor\log_p(|H|)\rfloor}\,k.
\]
The final inequality follows from $\lfloor\log_p(|H|)\rfloor\le \log_p(|H|)$.
\end{proof}

\subsection{Combining with the Jordan reduction}\label{subsec:jordan-completion}

\begin{thm}\label{thm:sphere_general_bound}
Let $G\le SO(n)$ be finite, acting linearly on $S^{n-1}$. Let $\FF$ be any field and set
$b_i(Y;\FF):=\dim_{\FF}H_i(Y;\FF)=\dim_{\FF}H^i(Y;\FF)$. Choose $A\triangleleft G$ abelian with $[G:A]\le J(n)$ and set
$Q:=G/A$. Then
\[
\sum_{i=0}^{n-1} b_i(S^{n-1}/G;\FF)\ \le\
n\cdot 3^n \cdot |Q|^{\log_2(3n)}
\ \le\
n\cdot 3^n \cdot J(n)^{\log_2(3n)}.
\]

\end{thm}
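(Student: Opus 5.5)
The plan is to combine the abelian bound of Theorem~\ref{thm:abelian_SO_betti_bound} with the orbit-space bound of Theorem~\ref{thm:orbit_finite_bound}, applied to the residual action of $Q=G/A$ on $X=S^{n-1}/A$. We split into two cases according to the characteristic of $\FF$.

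\emph{Characteristic $0$.} Here the transfer argument of Lemma~\ref{lem:orbit_transfer} works with $P$ trivial, since $|Q|$ is invertible in $\FF$. It gives an injection $H_*(X/Q;\FF)\hookrightarrow H_*(X;\FF)$. Hence
\[
\sum_i b_i(S^{n-1}/G)\le\sum_i b_i(X)\le 3^n,
\]
which is well within the stated bound.

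\emph{Characteristic $p>0$.} We need $X$ to be a finite CW complex of dimension $d=n-1$ carrying an admissible cellular $Q$-action. To obtain this, start from a $G$-invariant triangulation of $S^{n-1}$ as at the beginning of the section. If needed, take a further barycentric subdivision so that the action of $G$ is admissible for all subgroups. Then the quotient $X=S^{n-1}/A$ is a simplicial-type CW complex of dimension $n-1$ on which $Q$ acts cellularly. After one more barycentric subdivision, $Q$ acts admissibly on $X$: an element fixing a simplex setwise fixes it pointwise. Theorem~\ref{thm:abelian_SO_betti_bound} gives $b_i(X)\le 3^n=:k$ for every $i$. Applying Theorem~\ref{thm:orbit_finite_bound} to the $Q$-action on $X$ then yields, for each $i$,
\[
b_i(X/Q)\le (3n)^{\lfloor\log_p|Q|\rfloor}\,3^n\le |Q|^{\log_p(3n)}\,3^n\le |Q|^{\log_2(3n)}\,3^n,
\]
where the last step uses $p\ge 2$ and $|Q|\ge1$. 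Since $X/Q\cong S^{n-1}/G$, summing over the $n$ degrees $0\le i\le n-1$ gives the factor $n$. The final inequality follows from $|Q|\le J(n)$.

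The main obstacle is the point-set and combinatorial verification that $Q$ acts admissibly and cellularly on a finite CW structure of $X$ of dimension $n-1$. The construction to use is the following. Admissibility of $G$ on the triangulated sphere ensures that the images of simplices give $X$ a $\Delta$-complex structure. Barycentric subdivision of this structure (equivalently, of the sphere, which remains $G$-equivariant) then makes the induced $Q$-action admissible, by the standard fact \cite[\S1]{PutmanSmith}. Everything else is direct substitution into earlier results.
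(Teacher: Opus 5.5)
Your proposal is correct and follows the paper's proof essentially step for step: the abelian bound $b_i(X)\le 3^n$, the transfer injection in characteristic $0$, and Theorem~\ref{thm:orbit_finite_bound} with $d=n-1$, $k=3^n$ in characteristic $p$, followed by summing over the $n$ degrees. Your extra barycentric subdivision is harmless but unnecessary, since admissibility descends directly. If $qA$ stabilizes the image $\bar\sigma$ of a simplex $\sigma$ in $X$, then $q\sigma=a\sigma$ for some $a\in A$. Hence $a^{-1}q$ fixes $\sigma$ pointwise, and so $qA$ fixes $\bar\sigma$ pointwise.
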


\begin{proof}
Let $X:=S^{n-1}/A$, so $S^{n-1}/G\cong X/Q$ and $X$ is a finite $(n-1)$--dimensional CW complex. By
Theorem~\ref{thm:abelian_SO_betti_bound},
\[
\sum_{i=0}^{n-1} b_i(X;\FF) \le 3^n,
\qquad\text{hence}\qquad
b_i(X;\FF)\le 3^n\ \text{ for all }i.
\]

If $\mathrm{char}(\FF)=0$, then $|Q|$ is invertible in $\FF$, and the transfer for the $Q$--action (as in
\cite[\S2.7]{AdemDavis}) gives an injection $H_*(X/Q;\FF)\hookrightarrow H_*(X;\FF)$, so
$\sum_i b_i(X/Q;\FF)\le 3^n$.

Assume $\mathrm{char}(\FF)=p>0$. Applying Theorem~\ref{thm:orbit_finite_bound} to the admissible $Q$--action on $X$ with
$d=n-1$ and $k\le 3^n$ yields, for each $0\le i\le n-1$,
\[
b_i(X/Q;\FF)\ \le\ |Q|^{\log_p(3n)}\,3^n\ \le\ |Q|^{\log_2(3n)}\,3^n,
\]

using $\log_p(\cdot)\le \log_2(\cdot)$ for $p\ge 2$. Summing over $i=0,\dots,n-1$ gives the stated bound.
\end{proof}

\section{The quasi-thick part of non-positively curved orbifolds}

This section explains how Theorem~\ref{thm:intro main topological theorem} is used to deduce
Theorem~\ref{thm:intro main theorem}.  The argument follows the general thick--thin strategy of
Gromov and Gelander, together with Samet's modification for orbifolds. The main idea is to divide the space into a thick part, which can be covered by a good cover of contractible balls, and a thin part, where the topology is controlled separately, mainly due to the works of Margulis. In the orbifold case, instead of separating the orbifold
into a thick part and a thin part, one enlarges the thick part by attaching the singular locus, obtaining
a \emph{quasi-thick} region whose topology can still be controlled by a good cover.
Then, a $\Gamma$--invariant Morse function built from the displacement function decomposes the topology of
$O$ into finitely many local parts near the critical points. The topology of each part can then be modelled by a normal bundle,
whose homology is controlled by a sphere bundle over a union of balls. 
Our new input compared to \cite{samet2013betti} is that the local contributions of singularities
can be bounded uniformly over \emph{arbitrary} fields, using Theorem~\ref{thm:intro main topological theorem}. For additional details, we refer the reader to \cite{samet2013betti}.

\subsection{Setting, displacement, and quasi-thick parts}\label{subsec:qt_setting}

Let $X$ be an $n$--dimensional Hadamard manifold with sectional curvatures normalized so that
$-1\le K\le 0$, and let $\Gamma\le \Isom(X)$ be a lattice.  Set $O:=X/\Gamma$ and let
$\pi:X\to O$ be the quotient map.

For $\gamma\in\Gamma$ we write
\[
d_\gamma(x):=d(x,\gamma x)
\qquad\text{and}\qquad
\Min(\gamma):=\{x\in X: d_\gamma(x)=\inf d_\gamma\}.
\]
Given $\varepsilon>0$ and $x\in X$, let
\[
\Gamma_\varepsilon(x):=\big\langle \{\gamma\in\Gamma:\ d_\gamma(x)\le \varepsilon\}\big\rangle.
\]
For $m\in\N$ define the $\Gamma$--invariant subset
\[
X_{\ge \varepsilon,m}:=\{x\in X:\ |\Gamma_\varepsilon(x)|\le m\},
\qquad
O_{\ge \varepsilon,m}:=X_{\ge \varepsilon,m}/\Gamma.
\]
(Thus points with ``many'' small-displacement elements are removed, but torsion points with bounded stabilizer
are allowed.)

We will measure the size of $O$ using the notion of \emph{essential volume} at a fixed scale:
for $\rho>0$ define $\operatorname{ess\text{-}vol}_\rho(O)$ to be the supremum of the cardinalities of
$2\rho$--separated subsets of the $\rho$--thick part of $O$ (equivalently, the maximal number of disjoint
embedded $\rho$--balls in the $\rho$--thick part).
By standard packing/comparison estimates (using the curvature normalization), for each fixed $\rho$ one has
\begin{equation}\label{eq:essvol_vs_vol}
\operatorname{ess\text{-}vol}_\rho(O)\ \le\ C(n,\rho)\,\Vol(O).
\end{equation}
We will therefore aim for bounds in terms of $\operatorname{ess\text{-}vol}_\rho(O)$.

\subsection{A good cover of the quasi-thick part}

The key geometric input is that the quasi-thick part admits a uniformly bounded-overlap good cover.

\begin{thm}[Samet {\cite[Thm.~4.2]{samet2013betti}}]\label{thm:samet_good_cover}
Fix $\varepsilon>0$ and $m\in\N$.  There exist constants $\rho>0$ and $r\in\N$, depending only on
$n,\varepsilon,m$, such that $O_{\ge \varepsilon,m}$ is contained in a finite collection $\mathcal B$ of open
metric balls in $O$ with:
\begin{enumerate}[label=(\arabic*)]
\item each $B\in\mathcal B$ has radius at most $\varepsilon/4$;
\item every nonempty finite intersection of balls in $\mathcal B$ is contractible;
\item $|\mathcal B|\le \operatorname{ess\text{-}vol}_\rho(O)$;
\item each $B\in\mathcal B$ meets at most $r$ other balls of the cover.
\end{enumerate}
Moreover, if $\operatorname{ess\text{-}vol}_\rho(O)<\infty$ then $O_{\ge \varepsilon,m}$ is compact.
\end{thm}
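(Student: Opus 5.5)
The plan is to adapt Gelander's construction of a good cover of the thick part to the quasi-thick region $O_{\ge\varepsilon,m}$. This theorem is quoted from Samet, so I will only reconstruct the argument in outline.

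\textbf{Step 1: choose a maximal separated net.} Fix $\rho$ small compared to $\varepsilon$ (say $\rho\le\varepsilon/100$, to be decreased later). Take a maximal $\rho$-separated subset $\{x_i\}$ of $X_{\ge\varepsilon,m}$ that is $\Gamma$-invariant. Such a set exists by Zorn's lemma applied to $\Gamma$-invariant separated sets. Its image in $O$ is finite provided $\operatorname{ess\text{-}vol}_\rho(O)<\infty$. The counting argument runs as follows.
\begin{itemize}
\item At a point $x\in X_{\ge\varepsilon,m}$, every element of displacement at most $\varepsilon$ lies in the finite group $\Gamma_\varepsilon(x)$, which has order at most $m$.
\item Hence, after shrinking $\rho$ in terms of $m$ and $\varepsilon$, a ball $B(x,\rho')$ around $x$ contains a point of the genuine $\rho$-thick part, or else its image is an orbifold ball $B(x,\rho')/\Gamma_x$ of volume at least $\Vol(B(x,\rho'))/m$.
\item A packing argument then bounds the number of net points by $\operatorname{ess\text{-}vol}_\rho(O)$, after adjusting the constant $\rho$. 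This gives (3).
\end{itemize}

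\textbf{Step 2: define the balls and get bounded overlap.} Let $\mathcal B$ consist of the images of the balls $B(x_i,\delta)$, with $\delta\le\varepsilon/4$ chosen so that $\delta>\rho$. These cover $O_{\ge\varepsilon,m}$ by maximality, which gives (1). Bounded overlap (4) follows from a volume-doubling argument in $X$. Bishop--Gromov with $K\ge-1$ bounds the number of $\rho$-separated points in a $2\delta$-ball, and passing to $O$ costs at most a factor of $m$, since the relevant stabilizers have order at most $m$. The resulting $r$ depends only on $n,\varepsilon,m$.

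\textbf{Step 3: contractibility of intersections.} This is the main obstacle, because balls in $O$ are quotients of unions of translates of balls in $X$ and need not be convex. The plan is to work in $X$. A nonempty intersection $\bigcap_j \pi(B(x_j,\delta))$ lifts to a $\Gamma_\varepsilon$-type finite group $F$ and a convex set $C=\bigcap_{\gamma\in F}\bigcap_j \gamma B(\tilde x_j,\delta)$.
\begin{itemize}
\item Choose lifts $\tilde x_j$ mutually within $2\delta$.
\item Any $\gamma$ identifying points of these balls has displacement less than $4\delta\le\varepsilon$ near $\tilde x_1$, so $\gamma\in\Gamma_\varepsilon(\tilde x_1)$, which is finite of order at most $m$.
\item The intersection in $O$ is therefore $C'/F$ for a convex $F$-invariant set $C'$, a union over $F$-translates that one shows is convex because $\delta$ is small.
\item $C'$ has an $F$-fixed point: the circumcenter of a bounded orbit, by Cartan's theorem.
\item The geodesic contraction of $C'$ to that point is $F$-equivariant, so $C'/F$ is contractible. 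This gives (2).
\end{itemize}
The delicate point is ensuring that the relevant set is an intersection rather than a union of translates. I would first try to achieve this by centering the balls at points where the whole local group $\Gamma_\varepsilon$ is realized as the stabilizer, exploiting the $m$-bound.

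\textbf{Step 4: compactness.} If $\operatorname{ess\text{-}vol}_\rho(O)<\infty$, then $O_{\ge\varepsilon,m}$ is covered by finitely many closed balls of bounded radius in a proper space. It is also closed, because the condition $|\Gamma_\varepsilon(x)|\le m$ is closed by discreteness. Hence it is compact.
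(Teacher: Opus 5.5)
The paper does not prove this statement; it quotes it from Samet, so your outline has to stand on its own. Steps 1--2 are the usual Gelander-type packing arguments and can be repaired. Take the separated net in $O$: a $\Gamma$-invariant $\rho$-separated set in $X$ cannot contain any $x$ with $0<d_\gamma(x)<\rho$, so its maximality gives no covering near the singular locus. Also show that every point of $O_{\ge\varepsilon,m}$ is within $\varepsilon/4$ of a $\rho$-thick point; your ``or else'' alternative does not feed into $\operatorname{ess\text{-}vol}_\rho$.

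\textbf{The gap is in Step 3, where the proposed mechanism is false.} The preimage of $\pi(B(x_1,\delta))\cap\pi(B(x_2,\delta))$ is $\bigcap_j\bigcup_\gamma\gamma B(\tilde x_j,\delta)$. This is an intersection of \emph{unions} of translates, not the convex set $\bigcap_{\gamma\in F}\bigcap_j\gamma B(\tilde x_j,\delta)$. A union of $F$-translates of balls does not become convex, or even contractible, when $\delta$ is small, because the bad configurations are scale invariant.

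Here is a concrete counterexample.
\begin{enumerate}
\item Let $X=\R^2$ and let $\Gamma$ be a crystallographic group containing a rotation $g$ of order $3$ about $p$, with every element outside $\langle g\rangle$ moving $p$ by much more than $\varepsilon$. The same picture occurs near an order-$3$ cone point of a hyperbolic $2$-orbifold once $\delta$ is small.
\item Take $\delta\le\varepsilon/4$ and $y$ with $d(p,y)=s$, where $\delta<s<2\delta/\sqrt3$. Then $\pi(B(p,\delta))\cap\pi(B(y,\delta))=\bigl(B(p,\delta)\cap\bigcup_{i=0}^{2}g^iB(y,\delta)\bigr)/\langle g\rangle$.
\item The three convex lenses $B(p,\delta)\cap g^iB(y,\delta)$ intersect pairwise, because $p,g^iy,g^{i+1}y$ all lie within $\tfrac{\sqrt3}{2}s<\delta$ of the midpoint of $[g^iy,g^{i+1}y]$.
\item Their triple intersection is empty, because the smallest ball containing $y,gy,g^2y$ is centred at $p$ with radius $s>\delta$.
\item By the nerve lemma their union is homotopy equivalent to a circle. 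It misses $p$, so $\langle g\rangle$ acts freely on it.
\item Hence the intersection in $O$ is not contractible: its fundamental group contains $\Z$ with index $3$.
\end{enumerate}
Both centres lie in $O_{\ge\varepsilon,m}$ for $m\ge3$, and the radii satisfy (1).

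Your suggested remedy is to centre balls only at points $z$ with $\Gamma_\varepsilon(z)=\mathrm{Stab}(z)$ (so $p$ qualifies but $y$ does not). This cannot be carried out at the single scale $\varepsilon$ with radii $\le\varepsilon/4$. In the example, every $y\neq p$ with $d(y,p)\le\varepsilon/\sqrt3$ has $g\in\Gamma_\varepsilon(y)\setminus\mathrm{Stab}(y)$. So points at distance between $\varepsilon/4$ and $(1/\sqrt3-1/4)\varepsilon$ from $p$ lie in no admissible ball.

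Nor is it enough for each ball to be an orbifold chart $B(\tilde z,R)/\mathrm{Stab}(\tilde z)$. In the local model $\R^2/\{\pm1\}$ of an order-$2$ cone point, the chart balls of radius $R$ about the images of $(R,0)$ and $(0,R)$ meet in two components.

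What is needed is a genuinely multi-scale construction. Centres should lie on fixed-point sets of the local finite groups, with radii drawn from a chain of scales $\varepsilon\gg\varepsilon_1\gg\cdots$ whose length is bounded in terms of $m$. The goal is that every nonempty intersection lifts to a single convex $\widetilde V\subset X$ modulo a finite group with a fixed point in $\widetilde V$. This is exactly the form $V\cong\widetilde V/\Gamma_{\widetilde V}$ used in Proposition~\ref{main fibration proposition}, and your outline gives no argument for it.

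Finally, the justification in Step 4 is backwards. By discreteness, $\Gamma_\varepsilon(y)\le\Gamma_\varepsilon(x)$ for $y$ near $x$, so $\{x:|\Gamma_\varepsilon(x)|\le m\}$ is \emph{open}. It is in general not closed: take $m=1$ in a closed hyperbolic manifold with a closed geodesic shorter than $\varepsilon$. With the convention $d_\gamma(x)\le\varepsilon$ you only get that the closure of $O_{\ge\varepsilon,m}$ is compact. Literal compactness needs the strict convention $d_\gamma(x)<\varepsilon$, for which the set is closed.
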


\subsection{Bundles over a cover}
Samet proves a homology bound for a fibration over $\bigcup\mathcal B$ with characteristic-$0$ coefficients
\cite[Prop.~4.11]{samet2013betti}.  The main point where characteristic $0$ is used is in controlling the
homology of quotients of certain fibres.  Our Theorem~\ref{thm:intro main topological theorem}
replaces this input and yields the same conclusion over an arbitrary field.

\begin{prop}\label{main fibration proposition}
Let $\mathcal B$ be a cover as in Theorem~\ref{thm:samet_good_cover}, let $U:=\bigcup_{B\in\mathcal B}B$, and
let $E\to U$ be a (topological) fibre bundle whose fibre $F$ is either a sphere or a real vector space.
Then there exists a function $h=h(r)$ such that for every field $\F$ and every $j$,
\[
\dim_{\F}H_j(E;\F)\ \le\ h(r)\cdot|\mathcal B|
\ \le\ h(r)\cdot\operatorname{ess\text{-}vol}_\rho(O),
\]
where $\rho$ is the scale from Theorem~\ref{thm:samet_good_cover}.
\end{prop}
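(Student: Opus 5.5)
The plan is to run the Mayer--Vietoris (\v{C}ech) spectral sequence for the open cover $\{E|_B\}_{B\in\mathcal B}$ of $E$, exactly as in the proof of Theorem~\ref{thm:abelian_SO_betti_bound}, and to bound the $E_1$-page term by term. For a nonempty finite subfamily $\sigma=\{B_0,\dots,B_p\}\subseteq\mathcal B$ with $B_\sigma:=B_0\cap\dots\cap B_p\neq\emptyset$, property (2) of Theorem~\ref{thm:samet_good_cover} says $B_\sigma$ is contractible. Since a fibre bundle over a contractible paracompact base is trivial (or at least homotopy equivalent to its fibre), $E|_{B_\sigma}\simeq F$. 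The spectral sequence
\[
E_1^{p,q}=\bigoplus_{|\sigma|=p+1,\ B_\sigma\neq\emptyset} H^q(E|_{B_\sigma};\F)\ \Longrightarrow\ H^{p+q}(E;\F)
\]
then gives $\dim_\F H^j(E;\F)\le\sum_{p+q=j}\dim_\F E_1^{p,q}$. Here $\dim_\F H^q(F;\F)\le 1$ for $F$ a vector space, and $\dim_\F H^q(F;\F)\le 1$ for each $q$ when $F$ is a sphere (total at most $2$), uniformly in the field.

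It remains to count the nonempty intersections. By property (4), each ball meets at most $r$ others. So if $B_\sigma\neq\emptyset$, then all members of $\sigma$ meet a fixed one, say $B_0$. Assigning each $\sigma$ to a distinguished member (for instance its first element in a fixed ordering), the number of $\sigma$ with $|\sigma|=p+1$ is at most $|\mathcal B|\binom{r}{p}$. There are no such $\sigma$ once $p>r$, so the spectral sequence has at most $r+1$ nonzero columns. Summing,
\[
\dim_\F H^j(E;\F)\ \le\ \sum_{p=0}^{r}\binom{r}{p}\,|\mathcal B|\ \le\ 2^{r}\,|\mathcal B|,
\]
so $h(r)=2^r$ works, even with the factor for sphere fibres absorbed since we fix the degree $q=j-p$. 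Combining with property (3) gives the second inequality. The universal coefficient theorem over a field converts the cohomology statement to homology.

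The main subtlety I expect is justifying the \v{C}ech spectral sequence and the local triviality step in the topological setting. I would handle this by noting that $U$ is an open subset of the orbifold covered by finitely many open sets, so the Mayer--Vietoris spectral sequence for singular cohomology of an open cover applies directly to $E$. Contractibility of $B_\sigma$ together with the homotopy invariance of pullbacks of bundles over paracompact spaces gives $E|_{B_\sigma}\cong B_\sigma\times F$. The point relevant to the paper is that no characteristic-$0$ input enters here. The place where fields matter is in identifying which bundles $E$ arise (sphere bundles whose fibres near singular points are quotients by finite groups), and that is where Theorem~\ref{thm:intro main topological theorem} would later be used. If the intended $F$ were instead a spherical quotient $\mathbb{S}^k/G$, the same count would go through with the bound $\dim_\F H^q(F;\F)\le 3^k(k+1)!^{\log(3k)}$ from Theorem~\ref{thm:intro main topological theorem} replacing $1$.
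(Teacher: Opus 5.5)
Your Mayer--Vietoris count is fine. The bound $|\mathcal B|\binom{r}{p}$ on nonempty $(p+1)$-fold intersections and the $E_1$-page estimate are exactly the ``standard Mayer--Vietoris induction for bounded degree covers'' the paper cites, and for an honest locally trivial bundle with fibre $F$ your argument is complete. The gap is that this is not what the proposition is about.

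In the paper, $U$ is a region of an orbifold ($O$, or $Y/\Gamma_Y$ in the application), and $E$ is an orbifold bundle. A nonempty intersection $V$ of balls is $\widetilde V/\Gamma_{\widetilde V}$ with $\widetilde V\subset X$ contractible and $\Gamma_{\widetilde V}$ finite, and $E|_V=(\widetilde V\times F)/\Gamma_{\widetilde V}$. The fibre over the image of $\tilde v$ is $F/\mathrm{Stab}_{\Gamma_{\widetilde V}}(\tilde v)$, not $F$. So your step ``contractible base, hence $E|_{B_\sigma}\simeq F$'' fails, and with it the bound $\dim_\F H^q\le 1$.

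Case~2 shows this concretely. There $Y=\bigcap\Fix(\eta)$ with $Y\neq X$, and the elements $\eta$ fix $Y$ pointwise and act nontrivially on every normal sphere, so every fibre is a proper quotient of $F$. A local piece like $S^3/Q_8$ (with $Q_8\subset SU(2)$) already has $b_1(\,\cdot\,;\F_2)=2$. So $h(r)=2^r$ is not a valid constant. Your remark that no characteristic-sensitive input enters here is also the opposite of the paper's point: this proposition is exactly where Samet's characteristic-$0$ step is replaced.

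The missing idea is the paper's local computation:
\begin{itemize}
\item $\widetilde V$ admits a $\Gamma_{\widetilde V}$-equivariant contraction to a $\Gamma_{\widetilde V}$-fixed point $x_0$.
\item This contraction descends to a deformation retraction of $E|_V$ onto $F/\Gamma_{\widetilde V}$, where $\Gamma_{\widetilde V}$ acts on $F$ \emph{linearly}, through its differential on the normal space at $x_0$.
\item For $F$ a vector space this quotient is contractible.
\item For $F$ a sphere, Theorem~\ref{thm:intro main topological theorem} bounds $\dim_\F H_q(F/\Gamma_{\widetilde V};\F)$ by a constant depending only on $\dim F$, uniformly in $\F$ and in the finite group. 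That constant replaces your $1$ in the count.
\end{itemize}
Your closing remark points this way, but the variant you propose, an honest bundle with a fixed fibre $S^k/G$, still does not cover the application. The stabilizers, and hence the fibres, jump at singular points of $Y/\Gamma_Y$. The equivariant contraction handles each intersection $V$ directly, without needing a constant fibre.
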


\begin{proof}
Let $V$ be a nonempty intersection of balls from $\mathcal B$.  By construction in \cite{samet2013betti},
$V$ lifts to a contractible set $\widetilde V\subset X$ on which a finite subgroup
$\Gamma_{\widetilde V}\le \Gamma$ (of order $\le m$) acts, with $V\cong \widetilde V/\Gamma_{\widetilde V}$.
The restriction $E|_V$ is then a quotient of a trivial bundle $\widetilde V\times F$ by $\Gamma_{\widetilde V}$.
Because $\widetilde V$ admits a $\Gamma_{\widetilde V}$--equivariant contraction to a fixed point, $E|_V$
deformation retracts to $F/\Gamma_{\widetilde V}$.

If $F$ is a vector space, then $F/\Gamma_{\widetilde V}$ is contractible, hence $E|_V$ is contractible.
If $F$ is a sphere, then the action on the fibre is linear (coming from the action on the normal sphere at a
fixed point), so Theorem~\ref{thm:intro main topological theorem} gives a uniform bound (depending only on
$\dim F$) on the Betti numbers of $F/\Gamma_{\widetilde V}$ over \emph{any} field.

Thus $E$ is covered by the sets $E|_B$ ($B\in\mathcal B$), each meeting at most $r$ others, and every nonempty
finite intersection has homology uniformly bounded over $\F$.  A standard Mayer--Vietoris induction for bounded
degree covers (cf.\ \cite[Lem.~12.12]{BGS85}) yields
$\dim_\F H_j(E;\F)\le h(r)|\mathcal B|$ for a function $h$ depending only on $r$.
\end{proof}

\subsection{The Morse function and critical pairs}\label{subsec:morse_function}

We now recall the Morse-theoretic reduction from \cite[\S5]{samet2013betti} in a way that isolates the inputs
we will use later.

\smallskip\noindent
\textbf{Margulis constants.}
Let $\varepsilon_n>0$ and $m_n\in\N$ be the constants given by the Margulis lemma for $n$--manifolds: for every discrete $\Gamma<\Isom(X)$ and every $x\in X$,
$\Gamma_{\varepsilon_n}(x)$ contains a normal nilpotent subgroup of index $\le m_n$, and if
$\Gamma_{\varepsilon_n}(x)$ is finite then it is virtually abelian.
Fix $\varepsilon:=\varepsilon_n$ and $m:=m_n$.

We also use the following quantitative lemma ( \cite[Lem.~5.2]{samet2013betti}):

\begin{lem}\label{lem:samet_5_2}
There exist constants $\delta>0$ and $M_1\in\N$ (depending only on $n$) such that:
if $d(x,y)<2\varepsilon$ and $d_\gamma(x)\le \delta$, then $d_{\gamma^i}(y)\le \varepsilon$ for some $1\le i\le M_1$.
\end{lem}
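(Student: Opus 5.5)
The idea is to combine a pigeonhole argument on a bounded finite net with the triangle inequality. The naive estimate $d_{\gamma^i}(y)\le 2d(x,y)+d_{\gamma^i}(x)$ cannot give $\le\varepsilon$, since $d(x,y)$ may be close to $2\varepsilon$. What we must exploit is that $\gamma$ acts on the ball $B(x,2\varepsilon)$ almost as an isometry fixing $x$, that is, almost as a "rotation". Some bounded power of such a rotation is close to the identity on the whole ball. A finite net makes this quantitative without any compactness of $\Isom(X)$.

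\emph{Step 1 (a net of bounded size).} Put $\eta:=\varepsilon/10$. By the curvature normalization $-1\le K\le 0$ and standard volume comparison (Bishop--Gromov for the upper bound on ball volumes, and Günther / Euclidean comparison for the lower bound on $\eta/2$-balls), any $\eta$-separated subset of a ball of radius $3\varepsilon$ in $X$ has cardinality at most a constant $K=K(n)$. Since $\varepsilon=\varepsilon_n$, this constant depends only on $n$. Fix maximal $\eta$-separated sets $\{w_1,\dots,w_{K_1}\}\subset B(x,3\varepsilon)$ and $\{z_1,\dots,z_{K_2}\}\subset B(x,2\varepsilon)$, with $K_1,K_2\le K$. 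These sets are $\eta$-dense in their respective balls.

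\emph{Step 2 (choice of constants).} Set $M_1:=K^{K}$ and $\delta:=\varepsilon/(10M_1)$; both depend only on $n$. For $0\le j\le M_1$ and every $z\in B(x,2\varepsilon)$ we have
\[
d(\gamma^j z,x)\le d(\gamma^j z,\gamma^j x)+d(\gamma^j x,x)\le 2\varepsilon+j\delta<3\varepsilon .
\]
Hence each point $\gamma^j z_k$ lies within $\eta$ of some $w_{\ell}$. Assign to each $j\in\{0,\dots,M_1\}$ the tuple $(\ell_1(j),\dots,\ell_{K_2}(j))$ of chosen indices. There are at most $K^{K}=M_1$ possible tuples but $M_1+1$ values of $j$. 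By pigeonhole there are $j<j'$ with the same tuple, so $d(\gamma^{j'}z_k,\gamma^{j}z_k)\le 2\eta$ for all $k$.

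\emph{Step 3 (conclusion).} Put $i:=j'-j\in[1,M_1]$. Since $\gamma^j$ is an isometry,
\[
d(\gamma^i z_k,z_k)=d(\gamma^{j'}z_k,\gamma^j z_k)\le 2\eta \quad\text{for all } k.
\]
Given $y$ with $d(x,y)<2\varepsilon$, choose $z_k$ with $d(y,z_k)\le\eta$. Then
\[
d_{\gamma^i}(y)\le d(y,z_k)+d(z_k,\gamma^i z_k)+d(\gamma^i z_k,\gamma^i y)\le 4\eta<\varepsilon .
\]

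The only step needing care is Step 1: the uniform bound on the size of an $\eta$-net depending only on $n$. This is exactly where the curvature normalization enters. No discreteness of $\Gamma$ and no negativity of curvature is used here.
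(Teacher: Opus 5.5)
Your proof is correct. The packing constant $K=K(n)$ from Bishop--Gromov and G\"unther comparison checks out, as does the estimate $d(\gamma^j z,x)<2\varepsilon+M_1\delta<3\varepsilon$ for $0\le j\le M_1$. So do the pigeonhole over at most $K^K=M_1$ index tuples and the final triangle inequality $d_{\gamma^i}(y)\le 4\eta<\varepsilon$.

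The paper gives no argument of its own here; the lemma is quoted from Samet. The natural comparison is therefore with the route customary in Margulis-lemma arguments of Buser--Karcher and Ballmann--Gromov--Schroeder type; I cannot confirm this is exactly what Samet does. In that route one:
\begin{enumerate}
\item attaches to $\gamma$ its rotational part at $x$, the element of $O(n)$ obtained from $d\gamma_x$ followed by parallel transport back along $[\gamma x,x]$;
\item uses pigeonhole in the compact group $O(n)$ to find a power $\gamma^i$, $i\le M_1$, whose rotational part is close to the identity;
\item converts small displacement plus small rotation at $x$ into small displacement on $B(x,2\varepsilon)$ by Jacobi-field (Rauch) comparison, which is where the lower curvature bound enters.
\end{enumerate}

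Your version replaces the compact group $O(n)$ by the finite set of net-point configurations and never differentiates. It uses the geometry only through a uniform packing bound for $3\varepsilon$-balls, so it works verbatim for any isometry of any metric space with such a bound. It also produces a single exponent $i$ that works for all $y\in B(x,2\varepsilon)$ at once.

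In exchange, the rotational-part route gives far better constants. Covering numbers of $O(n)$ at a fixed scale are $e^{O(n^2)}$, whereas your $M_1=K^K$, with $K$ exponential in $n$, is doubly exponential. It also ties the lemma to the linear action on $T_xX$ that the paper uses anyway for stability. Since the lemma only requires dependence on $n$, your argument is fully sufficient.
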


\smallskip\noindent
\textbf{Stability.}
Set $M_2:=mM_1$.
For $s\in\N$, call $g\in SO(n)$ \emph{$s$--stable} if $C_{SO(n)}(g)=C_{SO(n)}(g^i)$ and
$\Fix(g)=\Fix(g^i)$ for $1\le i\le s$.  An elliptic isometry $\gamma\in\Isom(X)$ is \emph{$s$--stable} if the
linear action of $\gamma$ on $T_xX$ at a fixed point $x$ is $s$--stable.
By \cite[Prop.~2.5]{samet2013betti}, there exists a constant $J=J(n,M_2)$ such that for every isometry $\gamma$
there is some $1\le j\le J$ with $\gamma^j$ $M_2$--stable.

\smallskip\noindent
\textbf{The function $F$.}
Choose smooth functions
\[
g_k:\R_{\ge 0}\to \R_{\ge 0}\quad (k\ge 2),
\qquad
g_\infty:\R_{>0}\to \R_{\ge 0},
\]
such that:
(i) each is strictly decreasing on $(0,\varepsilon)$ and vanishes on $[\varepsilon,\infty)$;
(ii) $g_k(0)=k$ and $g_k(\delta)=1$ for $k\ge 2$;
(iii) $\lim_{t\to 0^+} g_\infty(t)=\infty$ and $g_\infty(\delta)=1$.
For $1\ne \gamma\in\Gamma$ let $o(\gamma)\in\N\cup\{\infty\}$ be its order and put $g_\gamma:=g_{o(\gamma)}$.

Define the set of ``relevant'' elements
\[
\Delta:=\left\{1\ne \gamma\in\Gamma:\ \inf d_\gamma<\delta,\ \text{and if $\gamma$ is elliptic then it is $M_2$--stable}\right\}
\]
and the function
\[
F(x):=\sum_{\gamma\in \Delta}\ \sum_{i=0}^{M_1}\ g_\gamma\!\bigl(d_{\gamma^i}(x)\bigr).
\]
Since $\Gamma$ is discrete, only finitely many summands are nonzero near each $x$, hence $F$ is smooth.
It is $\Gamma$--invariant, so it descends to a smooth function $f$ on $O=X/\Gamma$.

For each $x\in X$, let
\[
\Delta_x:=\left\{\gamma^i:\ \gamma\in\Delta,\ 0\le i\le M_1,\ d_{\gamma^i}(x)<\varepsilon\right\},
\]
i.e.\ the elements that actually contribute near $x$.

\smallskip\noindent
\textbf{Critical pairs.}
The following package of facts is proved in \cite[\S5]{samet2013betti} (Claims 1--7 there).  We record them in the
form we will use.

\begin{claim}\label{claim:proper}
The function $f$ is proper.  In particular, each sublevel set $f_{\le t}$ is compact.
\end{claim}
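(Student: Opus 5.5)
To prove Claim~\ref{claim:proper}, I will show that each sublevel set $f_{\le t}$ is a closed subset of a compact set. Closedness is immediate because $f$ is continuous. The real content is that $f$ blows up on the cusps and degenerate thin regions of $O$, so bounded values of $f$ confine us to a compact region.

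\textbf{Step 1: small $f$ gives a bounded group $\Gamma_\varepsilon(x)$.} Let $x\in X$ with $F(x)\le t$. I want to show $|\Gamma_\varepsilon(x)|$ is bounded by a constant $m'=m'(n,t)$, or more precisely that $x$ stays a definite distance from the truly thin part.

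First suppose $\Gamma_\varepsilon(x)$ is infinite. By the Margulis lemma it is virtually nilpotent, with a normal nilpotent subgroup of index $\le m$. Then it contains elements of infinite order $\gamma$ with arbitrarily small $d_\gamma(x)$, or else we are near the boundary of a Margulis tube or cusp.

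I will use Lemma~\ref{lem:samet_5_2} together with the stability constant $J$ to produce an element $\gamma'\in\Delta$ with a power $\gamma'^i$ ($i\le M_1$) satisfying $d_{\gamma'^i}(x)$ small. This gives a lower bound on $F(x)$ of the form $g_\infty(d)$ or $g_k(d)$ with $k$ large.

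Hence $F(x)\le t$ forces a uniform lower bound $\eta(t)>0$ on the displacement of every parabolic or hyperbolic element. For elliptic elements, the only way a large finite group $\Gamma_\varepsilon(x)$ can occur is with many elements of small displacement. Each stable element contributes at least $1$ once its displacement is $\le\delta$, so the number of contributing elements is at most $t$.

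\textbf{Step 2: from bounded data to compactness.} With the bounds of Step~1, $\pi(x)$ lies in $O_{\ge \eta,m'}$ for constants depending only on $n$ and $t$, after possibly enlarging by a bounded distance. For this I apply Theorem~\ref{thm:samet_good_cover} at the parameters $(\eta,m')$. Since $\Gamma$ is a lattice, $\operatorname{ess\text{-}vol}_\rho(O)\le C\Vol(O)<\infty$ by \eqref{eq:essvol_vs_vol}. The "moreover" part of the theorem then gives that $O_{\ge\eta,m'}$ is compact. So $f_{\le t}$ is a closed subset of a compact set, hence compact.

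\textbf{Main obstacle.} The hard step is the first one: showing that a small value of $F(x)$ really controls $|\Gamma_\varepsilon(x)|$. The sum defining $F$ only ranges over stable elements $\Delta$ with $\inf d_\gamma<\delta$ and their powers up to $M_1$. I must therefore argue that whenever $\Gamma_\varepsilon(x)$ is large (or $x$ is deep in a cusp), some element of $\Delta$ has a power with displacement $\ll\varepsilon$ at $x$, and not merely $\le\varepsilon$.

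My first attempt would be the following. Take a generator $\gamma$ of small displacement and replace it by $\gamma^j$, $j\le J$, to make it $M_2$-stable; its displacement grows by at most a factor $J$. Then use Lemma~\ref{lem:samet_5_2} in reverse along a path from a point of $\Min(\gamma)$ to $x$, noting that such points are within $2\varepsilon$ in the thin part. In the cusp case, I would use the monotonicity $g_\infty(t)\to\infty$ as $t\to0$, and follow Samet's Claims in \S5 verbatim.
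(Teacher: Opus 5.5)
You should use the paper's shape: place $f_{\le t}$ inside some $O_{\ge\eta,m'}$ and invoke the compactness clause of Theorem~\ref{thm:samet_good_cover}. The paper gives no argument of its own here and defers to Samet's \S5. But Step~1 carries all of the content, it is not established, and the mechanisms you sketch for it do not work.

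For elements of infinite order you need neither Lemma~\ref{lem:samet_5_2} nor $J$. Stability is imposed only on elliptic elements, so any infinite-order $\gamma$ with $d_\gamma(x)<\delta$ lies in $\Delta$ and contributes $g_\infty(d_\gamma(x))$. Hence $F(x)\le t$ directly gives $d_\gamma(x)\ge\eta(t)$. Lemma~\ref{lem:samet_5_2} cannot be used ``in reverse'': it only ever outputs displacement $\le\varepsilon$, which gives no lower bound on $F$.

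This still says nothing about $\Gamma_\eta(x)$. An infinite $\Gamma_\eta(x)$ may be generated entirely by elliptic elements, so you must still produce an infinite-order element of displacement $O(\eta)$ at $x$. That can be done, but your argument does not do it:
\begin{itemize}
\item intersect with the Margulis nilpotent subgroup of index $\le m$;
\item its Schreier generators are words of length $\le 2m-1$, so they have displacement $\le(2m-1)\eta$;
\item a finitely generated nilpotent group generated by torsion elements is finite, so one of these generators has infinite order.
\end{itemize}
Your stated dichotomy for infinite $\Gamma_\varepsilon(x)$ is also not correct as stated.

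The finite case is a genuine gap. Your count (``each stable element contributes at least $1$'') bounds the number of elements of $\Delta$ with displacement $\le\delta$ at $x$. It never shows that a large finite $\Gamma_\eta(x)$ produces many, or any, such elements. Elliptic elements of order $\le M_2$ are never $M_2$-stable, because $\gamma^{o(\gamma)}=1$ has a different fixed set. So, for example, involutions never enter $\Delta$, and the stable power $\gamma^j$ ($j\le J$) of a short generator can be trivial.

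Overcoming this needs a structural argument. One route is an abelian subgroup of bounded index with short generators, in which at most about $(J!)^n$ elements are killed by $J!$; compare the constant $M_3=m(J!)^n$. You would then still need to turn the resulting contributions into a bound in terms of $t$, since a single contributing element only gives $F\ge 1$.

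Alternatively, properness concerns one fixed lattice, so uniform constants are unnecessary. If the finite subgroups of $\Gamma$ have bounded order $m_\Gamma$, then $\{x:\Gamma_\eta(x)\ \text{finite}\}=X_{\ge\eta,m_\Gamma}$, and only the infinite case is needed. That bound on $m_\Gamma$ is itself an input you would have to justify, e.g.\ from the cusp structure of $O$.
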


\begin{claim}\label{claim:critical_pairs}
A point $x\in X$ is a critical point of $F$ if and only if it lies in
\[
Y_x:=\bigcap_{\eta\in \Delta_x}\Min(\eta).
\]
Moreover, if $x$ is critical and we set
\[
C_x:=\{y\in Y_x:\ \Delta_y=\Delta_x\},
\]
then $C_x$ is exactly the critical set of $F$ lying in $Y_x$, and for any $y\in C_x$ one has $Y_y=Y_x$ and $C_y=C_x$.
We call $(Y_x,C_x)$ the \emph{critical pair} associated to $x$.
\end{claim}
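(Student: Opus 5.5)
The plan is to compute the differential of $F$ explicitly and then use convexity of displacement functions in non-positive curvature. Near $x$, only the summands with $\eta\in\Delta_x$ are nonzero, so
\[
dF_x=\sum_{\eta\in\Delta_x} c_\eta\, g'_\eta\bigl(d_\eta(x)\bigr)\, (d d_\eta)_x ,
\]
where $c_\eta>0$ counts how often $\eta$ occurs as some $\gamma^i$. Here $g'_\eta<0$ on $(0,\varepsilon)$.

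For the first direction, suppose $x\in Y_x$, so $x$ minimizes every $d_\eta$ with $\eta\in\Delta_x$. For elliptic $\eta$ the function $d_\eta^2$ is smooth. For hyperbolic or parabolic $\eta$ the value $d_\eta(x)>0$, so $d_\eta$ is smooth near $x$. In either case each summand $g_\eta\circ d_\eta$ attains a local minimum at $x$, so its differential vanishes, and hence $dF_x=0$.

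For the converse, suppose $dF_x=0$ but $x\notin\Min(\eta_0)$ for some $\eta_0\in\Delta_x$. By convexity of $d_\eta$, take the geodesic from $x$ toward the convex set $Y_x$, or toward $\Min(\eta_0)$ if $Y_x$ is empty. Each $d_\eta$ with $\eta\in\Delta_x$ is convex along this geodesic and does not increase in the initial direction. The difficulty is ensuring such a common direction exists, that is, that $Y_x\neq\emptyset$. I would obtain this from the Margulis lemma together with Lemma~\ref{lem:samet_5_2}. The group $\langle\Delta_x\rangle\le\Gamma_\varepsilon(x)$ is virtually nilpotent, and its elements commute up to the structure in Samet's Claims. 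The $M_2$-stability hypothesis ensures that powers $\gamma^i$ have the same fixed sets and centralizers, so the minimal sets of the $\eta$'s coming from one $\gamma$ coincide and are invariant under the relevant centralizers. Commuting isometries preserve each other's $\Min$ sets, which yields a nonempty common intersection.

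Given such a direction $v$, we have $(dd_\eta)_x(v)\le0$ for all $\eta\in\Delta_x$, with strict inequality for $\eta_0$. Since each $g'_\eta<0$ there, $dF_x(v)>0$, a contradiction. This proves the characterization of critical points.

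For the statement about $C_x$: if $y\in Y_x$, then every $\eta\in\Delta_x$ has $d_\eta(y)=\inf d_\eta=d_\eta(x)<\varepsilon$, so $\Delta_x\subseteq\Delta_y$. It follows that $y$ is critical if and only if $y\in Y_y$. Whenever $\Delta_y=\Delta_x$ we have $Y_y=Y_x\ni y$, so $y$ is critical.

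Conversely, suppose $y\in Y_x$ is critical but $\Delta_y\supsetneq\Delta_x$, and pick an extra element $\eta'\in\Delta_y$. Then $y\in\Min(\eta')$ with $d_{\eta'}(y)<\varepsilon$. I would argue that this forces $d_{\eta'}(x)<\varepsilon$ via convexity on $Y_x$ and the same stability and Margulis input, which contradicts the choice of $\Delta_x$. I expect this step, together with the nonemptiness of $Y_x$, to be the main obstacle. It is where Samet's Claims~1--7 and the negative curvature assumption (strict convexity of hyperbolic displacement along non-axis directions) are used.

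Finally, for $y\in C_x$ we have $\Delta_y=\Delta_x$, so by definition $Y_y=Y_x$ and $C_y=C_x$.
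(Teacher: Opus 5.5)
Note first that the paper does not prove this claim; it imports it from Samet's \S5. Judged on its own terms, your proposal has two genuine gaps.

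\textbf{The case $Y_x=\emptyset$.} Your converse rests on proving $Y_x\neq\emptyset$, but $Y_x$ really is empty in an essential case. Parabolic elements have $\inf d_\gamma=0<\delta$, so they lie in $\Delta$. At points deep in a cusp, $\Delta_x$ contains such an element, and its $\Min$ is empty. The claim then asserts $dF_x\neq 0$, and your argument gives no descent direction. The fallback ``toward $\Min(\eta_0)$'' is empty too, and in general moving toward one $\Min(\eta_0)$ can increase the other $d_\eta$.

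What is missing is a case analysis of $\langle\Delta_x\rangle\le\Gamma_\varepsilon(x)$, which is virtually nilpotent by Margulis.
\begin{itemize}
\item If it is finite, Cartan's fixed point theorem gives $Y_x=\Fix(\langle\Delta_x\rangle)\neq\emptyset$. No commutation is involved. Your commutation argument would not work anyway: elements of $\Delta_x$ coming from different $\gamma$ need not commute, and mutual invariance of $\Min$ sets does not produce a common point.
\item If it contains a hyperbolic element, then since $K<0$ it preserves a single axis $\ell$, which is $\Min$ of each of its hyperbolic elements. Stability is exactly what stops an elliptic $\gamma^i\in\Delta_x$ from flipping $\ell$: a flip would give $\ell\subseteq\Fix(\gamma^{2i})$ but $\ell\not\subseteq\Fix(\gamma^{i})$, with $2i\le M_2$. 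So the elliptic elements fix $\ell$ pointwise and $Y_x=\ell$.
\item If it contains a parabolic element, it fixes a unique $\xi\in\partial X$. Along the ray $c$ from $x$ to $\xi$, every $d_\eta$ with $\eta\in\Delta_x$ is convex and bounded, hence non-increasing. It is strictly decreasing at $0$ whenever $d_\eta(x)>0$, since $K<0$ rules out constant distance between $c$ and $\eta c$. Hence $dF_x(c'(0))>0$. This is where negative curvature genuinely enters the claim.
\end{itemize}

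\textbf{The converse for $C_x$.} Your final step cannot be carried out. Convexity of $d_{\eta'}$ on $Y_x$ gives no upper bound at $x$ from its value at $y$, and a critical $y\in Y_x$ can have $\Delta_y\supsetneq\Delta_x$. For example, suppose:
\begin{itemize}
\item $\eta,\eta'\in\Gamma$ are commuting rotations of $\mathbb{H}^5$ about $y$ in orthogonal $2$-planes, of large prime orders, so all nontrivial elements of $\langle\eta,\eta'\rangle$ are $M_2$-stable;
\item $x\in\Fix(\eta)$ is far from $\Fix(\eta')$;
\item no element outside $\langle\eta,\eta'\rangle$ has displacement $<\varepsilon$ at $x$ or $y$.
\end{itemize}
Then $x$ and $y$ are both critical, $Y_x=\Fix(\eta)\ni y$, and $\eta'\in\Delta_y\setminus\Delta_x$. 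So the literal ``exactly the critical set lying in $Y_x$'' is not attainable.

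Your inclusion $\Delta_x\subseteq\Delta_y$ for $y\in Y_x$ is correct, and it does give something. Since $g_\gamma>0$ on $[0,\varepsilon)$, it yields $F(y)\ge F(x)$, with equality iff $\Delta_y=\Delta_x$. So $C_x$ is the minimum set of $F|_{Y_x}$, i.e.\ the critical points of $F$ in $Y_x$ at level $F(x)$. Likewise, $C_x=\{y \text{ critical}:\ Y_y=Y_x\}$, because $Y_y=Y_x$ forces $x\in\Min(\eta')$ for every $\eta'\in\Delta_y$. These are the forms the Morse--Bott reduction actually uses, and they are what you should prove.
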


Fix a critical pair $(Y,C)$.  Let
\[
C':=\{y\in Y:\ d(y,C)<\varepsilon/2\}.
\]

\begin{claim}\label{claim:local_invariance_and_injection}
There is a neighborhood of $C$ in $X$ on which small-displacement elements preserve $Y$; in particular the natural map
\[
C'/\Gamma_Y\ \longrightarrow\ X/\Gamma
\]
is injective, where $\Gamma_Y:=\{\gamma\in\Gamma:\ \gamma Y=Y\}$.
\end{claim}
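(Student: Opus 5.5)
The plan is to prove the two assertions of Claim~\ref{claim:local_invariance_and_injection} in order. First, choose a neighbourhood $N$ of $C$ on which every element of small displacement lies in $\Delta_y\subset\Delta_x$. Then show that these elements preserve $Y$. Finally, deduce injectivity by showing that any $\gamma$ identifying two points of $C'$ must lie in $\Gamma_Y$.

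\textbf{Step 1 (control of small-displacement elements near $C$).} Fix $y\in C$, so that $\Delta_y=\Delta_x$ and $Y=\bigcap_{\eta\in\Delta_x}\Min(\eta)$. Let $z$ satisfy $d(z,C)<\varepsilon/2$, and let $\gamma\in\Gamma$ have $d_\gamma(z)\le\delta$. By Lemma~\ref{lem:samet_5_2}, applied with $d(z,y)<2\varepsilon$, some power $\gamma^i$ with $1\le i\le M_1$ moves $y$ by at most $\varepsilon$. By the stability result \cite[Prop.~2.5]{samet2013betti}, a bounded power $\gamma^j$ is $M_2$-stable. This lets us place a suitable power of $\gamma$ in $\Delta_y=\Delta_x$. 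The index bookkeeping is why $M_2=mM_1$ and why powers up to $M_1$ appear in $F$. Stability then gives $\Fix(\gamma)=\Fix(\gamma^j)$ and equal centralizers. In the hyperbolic case we use instead that $\Min(\gamma)=\Min(\gamma^j)$, which holds in negative curvature because both equal the common axis.

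\textbf{Step 2 (invariance of $Y$).} Each element of $\Delta_x$ is in $\Gamma_{\varepsilon}(y)$, which is virtually nilpotent with bounded index by the Margulis lemma. I would show that $\gamma$ normalizes the set $\Delta_x$ up to the relevant powers. Conjugation preserves displacement at the fixed point $z$ up to $2d(z,y)$-type errors, so $\gamma\eta\gamma^{-1}$ has small displacement at $y$ and lies in $\Delta_y=\Delta_x$. Hence $\gamma$ permutes $\{\Min(\eta)\}_{\eta\in\Delta_x}$, since $\gamma\Min(\eta)=\Min(\gamma\eta\gamma^{-1})$, and therefore $\gamma Y=Y$. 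I expect this step to be the main obstacle. The difficulty is that $\gamma\eta\gamma^{-1}$ must land in $\Delta_x$ exactly rather than in a slightly larger set. Closing this gap requires the closed-set structure $\Delta_y=\Delta_x$ on $C$ and the choice $\delta\ll\varepsilon$. I would try first to compare $d_{\gamma\eta\gamma^{-1}}(y)$ with $d_\eta(\gamma^{-1}y)$, using that $\gamma^{-1}y$ stays within $\varepsilon$ of $C$.

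\textbf{Step 3 (injectivity).} Suppose $z_1,z_2\in C'$ and $\gamma z_1=z_2$. Choose $y_1,y_2\in C$ within $\varepsilon/2$ of $z_1,z_2$. Then $\gamma^{-1}\Delta_{z_2}\gamma=\Delta_{z_1}$. Moreover both $\Delta_{z_1}$ and $\Delta_{z_2}$ contain $\Delta_x$, because points of $Y$ near $C$ lie in $\Min(\eta)$ for all $\eta\in\Delta_x$ with the same displacement, and the converse inclusion follows from Step~1. Therefore $\gamma$ conjugates $\Delta_x$ to itself, so $\gamma Y=Y$ and $\gamma\in\Gamma_Y$. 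Hence $z_1$ and $z_2$ have the same image in $C'/\Gamma_Y$, which proves that $C'/\Gamma_Y\to X/\Gamma$ is injective.
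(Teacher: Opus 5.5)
There is a genuine gap, and it is in the step that carries the content of the claim. Note that the paper itself does not prove this claim; it quotes \cite[\S5]{samet2013betti}, so your argument has to stand on its own.

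\textbf{The false step.} In Step 3 you assert $\Delta_{z_1}=\Delta_{z_2}=\Delta_x$ for $z_1,z_2\in C'$, with ``the converse inclusion following from Step 1''. This is false. By definition $C=\{y\in Y:\Delta_y=\Delta_x\}$, so every $z\in C'\setminus C$ has $\Delta_z\supsetneq\Delta_x$. You do correctly get $\Delta_x\subseteq\Delta_z$ on all of $Y$. But $C'$ is in general strictly larger than $C$, and from $\gamma\Delta_{z_1}\gamma^{-1}=\Delta_{z_2}$ together with $\Delta_x\subseteq\Delta_{z_i}$ you cannot conclude $\gamma\Delta_x\gamma^{-1}=\Delta_x$.

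Step 1 cannot supply the reverse inclusion. At best it puts \emph{some power} of a short element $\gamma$ into $\Delta_x$, which only shows that $\gamma$ preserves $\Min$ of that power, not $Y$. It also misquotes stability: $M_2$-stability of $\gamma^j$ gives $\Fix(\gamma^j)=\Fix(\gamma^{ji})$, not $\Fix(\gamma)=\Fix(\gamma^j)$. And it never checks $\inf d_{\gamma^j}<\delta$.

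Step 2 is left open, and the estimate you propose there does not close it. Knowing that $\gamma^{-1}y$ lies within about $\varepsilon$ of $C$ only bounds $d_{\gamma\eta\gamma^{-1}}(y)=d_\eta(\gamma^{-1}y)$ by roughly $2\varepsilon$, not by $\varepsilon$.

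\textbf{The missing idea.} Test the conjugates at a point of $C$, where $\Delta_y=\Delta_x$ holds exactly, and use that the elliptic elements of $\Delta_x$ fix $Y$ pointwise. In the purely elliptic case, $Y=\bigcap_{\eta\in\Delta_x}\Fix(\eta)$. Suppose $\gamma z_1=z_2$ with $z_1,z_2\in C'$, and pick $y_2\in C$ with $d(z_2,y_2)<\varepsilon/2$.
\begin{enumerate}
\item For $\eta=\zeta^i\in\Delta_x$, the element $\gamma\eta\gamma^{-1}$ fixes $z_2$, so $d_{\gamma\eta\gamma^{-1}}(y_2)\le 2d(z_2,y_2)<\varepsilon$.
\item Since $\Delta$ is conjugation invariant, $\gamma\eta\gamma^{-1}=(\gamma\zeta\gamma^{-1})^i\in\Delta_{y_2}=\Delta_x$.
\item Conjugation by $\gamma$ therefore maps the finite set $\Delta_x$ injectively into itself, hence onto it.
\item Consequently $\gamma Y=\bigcap_{\eta\in\Delta_x}\Min(\gamma\eta\gamma^{-1})=Y$.
\end{enumerate}
This is exactly what the radius $\varepsilon/2$ in $C'$ is for. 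Lemma~\ref{lem:samet_5_2} and stability are not needed.

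The same computation with smaller constants gives the local-invariance statement. Take $d(z,y)<\varepsilon/8$ for some $y\in C$ and $d_\gamma(z)<\varepsilon/4$. Then $d(\gamma^{-1}y,y)<\varepsilon/2$, hence $d_\eta(\gamma^{-1}y)\le 2d(\gamma^{-1}y,y)<\varepsilon$ for every elliptic $\eta\in\Delta_x$.

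\textbf{The hyperbolic case.} For a hyperbolic $\eta\in\Delta_x$, no displacement estimate of this kind works, because $\min d_\eta$ may be close to $\varepsilon$. For injectivity, use that $Y=C$ is then a geodesic line (Claim~\ref{claim:structure_of_Y}). Then $C'=C$, and your Step 3 argument becomes literally correct. For local invariance, use the Margulis lemma in negative curvature: a short element near the axis generates, together with $\eta$, an elementary group, and such a group preserves the axis.
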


\begin{claim}\label{claim:transverse_decrease}
Let $x\in C'$ and let $c:[0,1)\to X$ be a geodesic ray with $c(0)=x$ orthogonal to $Y$.
Then $t\mapsto F(c(t))$ is strictly decreasing for $t>0$ sufficiently small.
\end{claim}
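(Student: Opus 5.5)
We need to prove Claim \ref{claim:transverse_decrease}: for $x\in C'$ and a geodesic $c$ leaving $x$ orthogonally to $Y$, the function $F\circ c$ is strictly decreasing for small $t>0$.

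\textbf{Reduction to finitely many summands.} Near $x$, $F$ is a finite sum $\sum g_\gamma(d_{\gamma^i}(\cdot))$. Terms with $d_{\gamma^i}(x)>\varepsilon$ vanish on a neighbourhood of $x$. Each $g$ is non-increasing, so it suffices to show two things:
\begin{itemize}
\item every displacement function $d_\eta$ that is relevant near $x$ is non-decreasing along $c$ for small $t$;
\item at least one such $d_\eta$, with $d_\eta(x)<\varepsilon$ (where $g$ is strictly decreasing), is strictly increasing.
\end{itemize}
Terms with $d_\eta(x)=\varepsilon$ exactly are harmless: $g$ vanishes on $[\varepsilon,\infty)$, so these terms can only decrease or stay constant.

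\textbf{Step 1: elements preserving $Y$.} By Claim \ref{claim:local_invariance_and_injection}, every $\eta$ with $d_\eta(x)\le\varepsilon$ preserves $Y$ for $x$ near $C$. Take $y\in C$ with $d(x,y)<\varepsilon/2$. Then $\Delta_y$ consists of elements with $Y\subset\Min(\eta)$. I would argue that every relevant $\eta$ at $x$ either lies in $\Delta_y$ or is a power $\gamma^i$ of some $\gamma\in\Delta$ that preserves $Y$. This uses Lemma \ref{lem:samet_5_2}, together with the $M_2$-stability of elliptic elements, which guarantees $\Fix(\gamma^i)=\Fix(\gamma)$.

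\textbf{Step 2: monotonicity via convexity.} The function $d_\eta$ is convex on the Hadamard manifold $X$. Suppose $\eta$ preserves $Y$ and $Y\subset \Min(\eta)$, or more generally $\eta$ preserves $Y$. Then the nearest-point projection $p_Y$ commutes with $\eta$. Since $c$ leaves $Y$ orthogonally, $p_Y(c(t))=x$ for small $t$, and comparison gives $d_\eta(c(t))\ge d_\eta(x)$. Combined with convexity, this shows $d_\eta\circ c$ is non-decreasing.

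\textbf{Step 3: strictness.} We need one $\eta\in\Delta_x$ for which $d_\eta\circ c$ strictly increases. Here negative curvature enters.
\begin{itemize}
\item If $\eta$ is elliptic, then $Y\subseteq\Fix(\eta)$. As $c$ leaves $\Fix(\eta)\cap Y$ orthogonally, $d_\eta(c(t))>0=d_\eta(x)$, provided $c'(0)$ is not fixed by $d\eta$.
\item If $\eta$ is hyperbolic, strict convexity of $d_\eta$ off the axis in $K<0$ gives strict increase away from $\Min(\eta)$.
\end{itemize}
The main obstacle is ensuring that the direction $c'(0)$, being orthogonal to $Y=\bigcap\Min(\eta)$, is not tangent to $\Min(\eta)$ for all $\eta$ simultaneously. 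I expect this follows because the tangent cone of the intersection of these convex sets (totally geodesic sets, in the elliptic case) is the intersection of their tangent cones. A direction orthogonal to $Y$ at a point of $C'$ then leaves some $\Min(\eta)$, and the corresponding term $g_\eta(d_\eta)$ strictly decreases.

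Subtle cases: $x\in C'\setminus C$, where $\Delta_x$ may be smaller than $\Delta_y$, must be handled using the constancy $Y_y=Y_x$ from Claim \ref{claim:critical_pairs}.
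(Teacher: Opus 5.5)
Your overall strategy is the right one; the paper gives no proof of its own here and simply imports the claim from Samet's \S5. Every active term is non-increasing along $c$ because its element preserves $Y$, so $p_Y$-equivariance plus convexity makes $d_\eta\circ c$ non-decreasing. Strictness comes from one element whose $\Min$ set $c$ leaves.

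However, the step you flag for $x\in C'\setminus C$ is handled incorrectly. You say $\Delta_x$ may be smaller than $\Delta_y$, and you propose to use $Y_x=Y_y$ from Claim~\ref{claim:critical_pairs}. That claim only gives $Y_z=Y$ for $z\in C$, and it fails off $C$. The inclusion actually runs the other way. For $y\in C$ and $\eta\in\Delta_y$ one has $Y=Y_y\subseteq\Min(\eta)$, so $d_\eta(x)=\inf d_\eta=d_\eta(y)<\varepsilon$ for every $x\in Y$; that is, $\Delta_y\subseteq\Delta_x$. Now take $x\in C'\setminus C$ and some $\eta\in\Delta_x\setminus\Delta_y$. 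This $\eta$ cannot satisfy $Y\subseteq\Min(\eta)$, since otherwise it would lie in $\Delta_y$. Hence $Y_x\subsetneq Y$, and running your tangent-space argument with $\bigcap_{\eta\in\Delta_x}\Min(\eta)$ is about the wrong set.

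The fix is to take the strictness witness from $\Delta_y$, which is active at $x$ with the same displacement, and to use $Y=\bigcap_{\eta\in\Delta_y}\Min(\eta)$. These $\Min$ sets are totally geodesic through $x$: they are fixed sets, or the axis, which is all of $\Min(h)$ because $K<0$. Therefore $Y=\exp_x\bigl(\bigcap_\eta T_x\Min(\eta)\bigr)$, and a nonzero $c'(0)\perp T_xY$ fails to be tangent to some $\Min(\eta)$. Then $d_\eta\circ c$ is convex, minimal at $0$ and larger for $t>0$, hence strictly increasing. Note that it is total geodesicity that justifies this; for general convex sets the tangent cone of an intersection is not the intersection of the tangent cones.

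A second, smaller point: your reduction dismisses terms with $d_\eta(x)=\varepsilon$ because $g$ vanishes on $[\varepsilon,\infty)$. That is not a valid reason, since such a term becomes positive, i.e.\ increases, as soon as $d_\eta$ drops below $\varepsilon$ along $c$. These terms are controlled only by your Steps 1--2. So you should state exactly the form of Claim~\ref{claim:local_invariance_and_injection} you use: for every $x\in C'$ (not merely $x$ near $C$) and every term with $d_{\gamma^i}(x)\le\varepsilon$, one has $\gamma^iY=Y$. With that stated, the detour through Lemma~\ref{lem:samet_5_2} and stability in Step 1 is unnecessary, and the argument goes through.
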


We now introduce the constants used to describe neighbourhoods of $C$ inside $Y$.  Set
\[
\delta_2:=\frac{\delta}{2mJ+2},
\qquad
M_3:=m\,(J!)^n.
\]

\begin{claim}\label{claim:structure_of_Y}
Let $(Y,C)$ be a critical pair.
If $\Delta_C$ contains a hyperbolic element, then $Y=C$ and $Y$ is a geodesic line.
Otherwise, $Y$ is an $m$--stable singular submanifold and there exists a $\Gamma_Y$--invariant neighborhood
\[
C''\subset C'
\]
of $C$ in $Y$ such that $C''/\Gamma_Y$ is contained in the $(\delta_2,M_3)$--quasi-thick part of $Y/\Gamma_Y$.
\end{claim}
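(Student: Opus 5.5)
We split according to whether $\Delta_C$ contains a hyperbolic element. In the hyperbolic case, take $\gamma\in\Delta_C$ hyperbolic. Since $C\subseteq Y\subseteq\Min(\gamma)$, we have to show that in strict negative curvature $Y$ is a single geodesic line, namely the axis of $\gamma$, and that $C=Y$. Because $-1\le K<0$, the flat strip theorem forces $\Min(\gamma)$ to be exactly one geodesic axis $\ell$: two parallel axes would bound a flat strip, which is impossible. Every other $\eta\in\Delta_C$ has small displacement on $C$. By Lemma~\ref{lem:samet_5_2} and the Margulis lemma, $\langle\Delta_x\rangle$ is virtually nilpotent, and in negative curvature it therefore fixes the endpoints of $\ell$ or preserves $\ell$. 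Hence each $\Min(\eta)$ contains $\ell$ or meets it in a convex subset that is invariant under translation by $\gamma$, which again must be all of $\ell$. This gives $Y=\ell$. Now $d_\eta$ is constant along $\ell$ for every $\eta$ preserving $\ell$: for $\gamma$-powers it is the translation length, and for elliptic $\eta$ fixing $\ell$ it is zero. So the membership condition $d_\eta(y)<\varepsilon$ does not change as $y$ moves along $\ell$. It follows that $\Delta_y=\Delta_x$ for all $y\in\ell$, and therefore $C=Y$.

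In the elliptic case, every $\eta\in\Delta_C$ is elliptic and $M_2$-stable. Thus $Y=\bigcap_\eta\Fix(\eta)$ is a totally geodesic submanifold, the fixed set of the finite group $\langle\Delta_x\rangle$; its order is at most $m$ by the Margulis constants. Stability of the generators is what makes $Y$ an $m$-stable singular submanifold: its tangent space is $\bigcap\Fix(d\eta)$, and this intersection is unchanged when each $\eta$ is replaced by a power $\eta^i$ with $i\le m$. Next define $C''$ as the set of $y\in C'$ whose distance to $C$ is less than a small multiple of $\delta_2$, chosen so that $\Gamma_Y$-invariance follows from Claim~\ref{claim:local_invariance_and_injection}.

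To prove the quasi-thickness statement we fix $y\in C''$ and control the group $(\Gamma_Y)_{\delta_2}(y)$ acting on $Y$. Take $\beta\in\Gamma_Y$ with $d_\beta(y)\le\delta_2$. By \cite[Prop.~2.5]{samet2013betti}, some power $\beta^j$ with $j\le J$ is $M_2$-stable, and $d_{\beta^j}(y)\le J\delta_2<\delta$. Since $\delta_2=\delta/(2mJ+2)$, this remains below $\delta$ even after multiplying by products of up to $m$ such elements. Hence $\beta^j\in\Delta$. Moving from $y$ to the nearby $x\in C$ (at distance less than $2\varepsilon$) and applying Lemma~\ref{lem:samet_5_2}, we get $\beta^{ji}\in\Delta_x$ for some $i\le M_1$. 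Consequently $\beta^{ji}$ fixes $Y$ pointwise, so the image of $\beta$ in the group acting effectively on $Y$ has order dividing $ji\le J\cdot M_1$.

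Therefore the effective group generated by the $\delta_2$-small elements on $Y$ is a finite group all of whose generators have bounded order. By the Margulis lemma it is virtually nilpotent with index at most $m$, and its action is linear on $T_yY$ of dimension at most $n$. A finite linear group of rank at most $n$ that contains an abelian subgroup of index at most $m$ and is generated by elements of order at most $J!$ has its abelian part of exponent dividing $(J!)$ and of rank at most $n$, hence of size at most $(J!)^n$. This gives the bound $M_3=m(J!)^n$.

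The main obstacle is this last counting step together with the $\beta^j$ argument: we must pass from $y$ to points of $C$ while keeping displacements below $\delta$. That is precisely where the choice of $\delta_2$ enters, and where we must check that $\Delta_y=\Delta_x$ is not required at $y$ itself.
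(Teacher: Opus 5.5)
Your elliptic-case counting step is false as stated, and this is a genuine gap. Bounded orders of the generators of the effective group $\bar G$ (the image of $(\Gamma_Y)_{\delta_2}(y)$ acting on $Y$) do not bound the exponent of an abelian subgroup of bounded index. The dihedral group $D_N\subset O(2)$ is generated by two reflections, yet its rotation subgroup, of index $2$, is cyclic of order $N$. Nothing in your argument stops the $\delta_2$-short elements at $y$ from behaving like such reflections.

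The fix is to run your power argument on generators of the abelian subgroup itself. Let $\bar A\le\bar G$ have index $\le m$. Schreier's lemma gives generators of its preimage that are words $w$ of length $\le 2m-1$ in the $\delta_2$-short elements, so $d_w(y)\le(2m-1)\delta_2$. Pick $j\le J$ with $w^j$ $M_2$-stable. Take $C''$ to be the $\delta_2$-neighbourhood of $C$ in $Y$, so that some $x\in C$ has $d(x,y)<\delta_2$. Then
\[
d_{w^j}(x)\le d_{w^j}(y)+2d(x,y)<\bigl((2m-1)J+2\bigr)\delta_2\le\delta .
\]
Hence $w^j\in\Delta_x$ directly, without Lemma~\ref{lem:samet_5_2}, and so $w^j$ fixes $Y$ pointwise. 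This is exactly what $\delta_2=\delta/(2mJ+2)$ is built for. Your remark about products of up to $m$ elements points at it but is never used.

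The same estimate gives orders $\le J$, so $\bar A$ has exponent dividing $J!$ and rank $\le n$, whence $|\bar G|\le m(J!)^n=M_3$. Your route through Lemma~\ref{lem:samet_5_2} only gives orders $\le JM_1$, which does not yield $M_3$. You also need to justify two further points. First, $\bar G$ is finite: the Schreier generators of the Margulis nilpotent subgroup are torsion in $\bar G$ by the same estimate, and a finitely generated nilpotent group generated by torsion elements is finite. Second, $\bar G$ has an abelian, not merely nilpotent, subgroup of index $\le m$; dihedral $2$-groups show that nilpotent is not enough. Finally, the assertion $|\langle\Delta_x\rangle|\le m$ is false: the Margulis lemma bounds an index, not the order of a finite group.

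In the hyperbolic case, the step claiming that $\Min(\eta)\cap\ell$ is a convex set invariant under translation by $\gamma$ is unjustified. It fails for an elliptic $\eta$ that preserves $\ell$ but swaps its endpoints. Together with $\gamma$ such an $\eta$ generates a virtually cyclic group, so the Margulis lemma does not exclude it. For such $\eta$, $\Fix(\eta)\cap\ell$ is a single point, and the critical pair would have $Y$ a point rather than a line. What rules this out is stability, which you never invoke here. Such an $\eta$ satisfies $\ell\subseteq\Fix(\eta^2)\neq\Fix(\eta)$. By contrast, every elliptic element of $\Delta_x$ is a power $\gamma^i$, $i\le M_1$, of an $M_2$-stable $\gamma$, and is therefore $m$-stable because $M_2=mM_1$.

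Likewise, for $C=Y$ you only showed $\Delta_x\subseteq\Delta_y$ for $y\in\ell$. The reverse inclusion needs the same Margulis and axis-preservation argument applied at $y$ itself. That shows any relevant element with displacement $<\varepsilon$ at $y$ preserves $\ell$ without flipping it, and hence has constant displacement along $\ell$.
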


\smallskip\noindent
\textbf{Finiteness of critical values.}
Claim~\ref{claim:structure_of_Y} reduces the critical pairs to two types.
When $K<0$, the hyperbolic (geodesic) type is quantitatively controlled by \cite[Thm.~3.5]{samet2013betti}.
The remaining (purely elliptic) type is controlled by \cite[Thm.~3.2]{samet2013betti} together with the good cover
of the quasi-thick part (Theorem~\ref{thm:samet_good_cover}); in particular there are only finitely many critical pairs
up to $\Gamma$--conjugacy, hence $f$ has finitely many critical values.  We will also use that by compactness considerations, the critical sets project to
compact subsets of $O$.

Let
\[
0=c_1<c_2<\cdots<c_k
\]
be the distinct critical values of $f$.  Choose $\mu>0$ so small that each interval $[c_i-\mu,c_i+\mu]$ contains no
critical values other than $c_i$, and these intervals are pairwise disjoint.
Since $f$ is proper (Claim~\ref{claim:proper}), standard Morse-theoretic deformation arguments (cf.\ \cite{milnor1963morse})
show that the gradient flow yields deformation retractions
\[
O\ \simeq\ f_{\le c_k+\mu},
\qquad
f_{\le c_{i+1}-\mu}\ \simeq\ f_{\le c_i+\mu}.
\]
Using subadditivity of ranks for triples gives, for every $j$ and every field $\F$,
\begin{equation}\label{eq:rank_inequality_rewrite}
\dim_{\F}H_j(O;\F)\ \le\ \sum_{i=1}^k \dim_{\F}H_j\bigl(f_{\le c_i+\mu},\,f_{\le c_i-\mu};\F\bigr).
\end{equation}
Thus, it remains to bound the relative groups on the right.

Fix one critical value $c$ and fix a corresponding critical pair $(Y,C)$ (up to $\Gamma$--conjugacy).
Let
\[
V:=f_{\le c+\mu}\cap \pi(Y)\subset O.
\]
For $\mu$ sufficiently small we may assume $V\subset \pi(C'')$ (where $C''$ is from Claim~\ref{claim:structure_of_Y}).
Consider the triple
\[
f_{\le c-\mu}\ \subset\ (f_{\le c+\mu}\setminus V)\ \subset\ f_{\le c+\mu}.
\]
Claim~\ref{claim:transverse_decrease} implies that, near $\pi(C'')$, the negative gradient flow does not move points
towards $\pi(Y)$.  Since $f$ has no critical points in $f_{\le c+\mu}\setminus V$, one obtains a deformation retraction
\[
f_{\le c+\mu}\setminus V\ \simeq\ f_{\le c-\mu},
\]
hence
$H_*\bigl(f_{\le c+\mu}\setminus V,\,f_{\le c-\mu};\F\bigr)=0$.
The long exact sequence of the triple yields
\[
H_*\bigl(f_{\le c+\mu},\,f_{\le c-\mu};\F\bigr)\ \cong\ H_*\bigl(f_{\le c+\mu},\,f_{\le c+\mu}\setminus V;\F\bigr).
\]
By excision, the latter depends only on a small neighborhood of $V$ in $f_{\le c+\mu}$.
Taking an orbifold tubular neighborhood identifies that neighborhood with the (orbifold) normal bundle $N(V)$, and we get
\begin{equation}\label{eq:tubular_reduction_rewrite}
H_j\bigl(f_{\le c+\mu},\,f_{\le c-\mu};\F\bigr)\ \cong\
H_j\bigl(N(V),\,N(V)\setminus N_0(V);\F\bigr),
\end{equation}
where $N_0(V)$ is the zero section.

\subsection{Bounding the tubular contribution}

We now bound the right-hand side of \eqref{eq:tubular_reduction_rewrite} in the two cases of
Claim~\ref{claim:structure_of_Y}.

\medskip\noindent
\textbf{Case 1: there is a hyperbolic element.}
Then $Y=C$ is a geodesic line, so $V$ is $1$--dimensional (either contractible or a circle in the quotient).
In particular, $N(V)$ is homotopy equivalent to $V$, and $N(V)\setminus N_0(V)$ is homotopy equivalent to a
sphere bundle over $V$.  It follows that for all $j$,
\[
\dim_{\F}H_j\bigl(N(V),\,N(V)\setminus N_0(V);\F\bigr)\ \le\ 3.
\]

\medskip\noindent
\textbf{Case 2: the critical pair is purely elliptic.}
Then $V\subset \pi(C'')$ lies in the $(\delta_2,M_3)$--quasi-thick part of $Y/\Gamma_Y$.
Apply Theorem~\ref{thm:samet_good_cover} \emph{inside $Y/\Gamma_Y$} with parameters $(\delta_2,M_3)$.
This gives constants $\rho_3$ and $r_3$ (depending only on $n$) and a cover of a neighborhood of $V$ in $Y/\Gamma_Y$ by
balls of radius $\le \delta_2/4$ with contractible intersections, bounded overlap $r_3$, and
cardinality bounded by $\operatorname{ess\text{-}vol}_{\rho_3}(Y/\Gamma_Y)$.

Let $U_Y$ be the union of those balls and consider the sphere bundle model for
$N(U_Y)\setminus N_0(U_Y)\to U_Y$.  Using Proposition~\ref{main fibration proposition} for this sphere bundle (and for
the corresponding bundle over $U_Y$ itself) gives a bound, uniform over all fields $\F$,
\begin{align*}
\dim_{\F}H_j\bigl(N(U_Y);\F\bigr)
&\le h(r_3)\,\operatorname{ess\text{-}vol}_{\rho_3}(Y/\Gamma_Y),\\
\dim_{\F}H_j\bigl(N(U_Y)\setminus N_0(U_Y);\F\bigr)
&\le h(r_3)\,\operatorname{ess\text{-}vol}_{\rho_3}(Y/\Gamma_Y).
\end{align*}
To compare $V$ with $U_Y$, one uses a choice of the functions $g_k$:
by taking $g_k(\varepsilon/2)$ sufficiently small, one ensures that in a fixed neighbourhood of $C$ inside $Y$,
the values of $F$ remain within a controlled window above the minimum on $C$ (cf.\ \cite[\S5,(7)]{samet2013betti}).
In particular, one can arrange that $V$ is a deformation retract of a slightly larger sublevel region that contains $U_Y$.
This yields that the homology of the pairs for $V$ is bounded by that for $U_Y$, and hence
\begin{equation}\label{eq:tubular_bound_rewrite}
\dim_{\F}H_j\bigl(N(V),\,N(V)\setminus N_0(V);\F\bigr)
\ \le\ 2h(r_3)\,\operatorname{ess\text{-}vol}_{\rho_3}(Y/\Gamma_Y).
\end{equation}

\subsection{Summing over critical pairs.}

Insert the bounds from Case~1 and \eqref{eq:tubular_bound_rewrite} into
\eqref{eq:rank_inequality_rewrite}.  The number of hyperbolic-type critical pairs is controlled (when $K<0$)
by \cite[Thm.~3.5]{samet2013betti}, giving a contribution bounded by a constant times
$\operatorname{ess\text{-}vol}_{\varepsilon}(O)$.

For elliptic-type pairs, apply \cite[Thm.~3.2]{samet2013betti} to a maximal set of non-conjugate $m$--stable singular
submanifolds $Y$, with $\varepsilon_1:=\rho_3$.  This yields a constant $\rho_4=\rho_4(n,\rho_3)$ such that
\[
\sum_Y \operatorname{ess\text{-}vol}_{\rho_3}(Y/\Gamma_Y)\ \le\ \operatorname{ess\text{-}vol}_{\rho_4}(O).
\]
Therefore, summing \eqref{eq:tubular_bound_rewrite} over the elliptic-type critical pairs bounds their total contribution
by a constant times $\operatorname{ess\text{-}vol}_{\rho_4}(O)$.

Combining both types, we obtain for each $j$:
\[
\dim_\F H_j(O;\F)\ \le\ C(n)\,\operatorname{ess\text{-}vol}_{\rho'}(O)
\]
for some $\rho'=\rho'(n)>0$ and constant $C(n)$ depending only on the dimension.
Finally, \eqref{eq:essvol_vs_vol} converts this into the desired linear bound in the volume, completing the proof of
Theorem~\ref{thm:intro main theorem}.

\bibliography{mybibliography}
\bibliographystyle{alpha}

\bigskip
  \footnotesize

  G.~Kapon, \textsc{Department of Mathematics, Weizmann Institute of Science, 234 Herzl Street, Rehovot 76100, Israel.}\par\nopagebreak

  \medskip

  R.~Slutsky, \textsc{Mathematical Institute,
University of Oxford,
Andrew Wiles Building,
Oxford, OX2 6GG, United Kingdom.}\par\nopagebreak
  \textit{E-mail address:} \texttt{slutsky@maths.ox.ac.uk}\par\nopagebreak

\end{document}